\newtheorem{theorem}{Theorem}[section]
\newtheorem{lemma}[theorem]{Lemma}
\newtheorem{corollary}[theorem]{Corollary}
\newtheorem{proposition}[theorem]{Proposition}
\theoremstyle{definition}
\newtheorem{definition}[theorem]{Definition}
\newtheorem{example}[theorem]{Example}
\theoremstyle{remark}
\newtheorem{remark}[theorem]{Remark}
\numberwithin{equation}{section}
\begin{document}
\title{Optimal transportation for the far-field reflector problem}



\author{Gang Bao}
\address{School of Mathematical Sciences, Zhejiang University, Hangzhou 310027, China.}
\curraddr{}
\email{baog@zju.edu.cn}
\thanks{}

\author{Yixuan Zhang}
\address{School of Mathematical Science, Peking University, Beijing 100871, China.}
\curraddr{}
\email{yixuan@pku.edu.cn}

\thanks{}
\subjclass[2010]{Primary }

\keywords{Inverse reflector problem, Optimal transportation theory, generalized Monge-Ampère equation}

\date{}

\dedicatory{}

\begin{abstract}
The inverse reflector problem aims to design a freeform reflecting surface that can direct the light from a specified source to produce the desired illumination in the target area, which is significant in the field of geometrical non-imaging optics. Mathematically, it can be formulated as an optimization problem, which is exactly the optimal transportation problem (OT) when the target is in the far field. The gradient of OT is governed by the generalized Monge-Ampère equation that models the far-field reflector system. Based on the gradient, this work presents a Sobolev gradient descent method implemented within a finite element framework to solve the corresponding OT. Convergence of the method is established and numerical examples are provided to demonstrate the effectiveness of the method.

\end{abstract}

\maketitle
\section{Introduction}

\subsection{Far-field reflector problem}

The far-field reflector system consists of a point source of the light placed at the origin $O$, a perfectly reflecting surface $\Gamma$ that is radial relative to $O$, and a far-field target area that receives the light originating from the source. The light source has the power density $f(x)$ in the direction $x\in \Omega\subset \mathbb{S}^{2}$, which describes the distribution of the intensity of rays from the source $O$. The reflecting surface $\Gamma$ can be characterized by  
\[\Gamma :=\left\{x\rho(x):x\in \Omega,\,\rho>0\right\}.\]
For a ray originating from $O$ in the direction $x$, it hits the surface $\Gamma$ at the point $x\rho(x)$ and produces a reflected ray in the direction $y\in \mathbb{S}^{2}$, which follows Snell's law
\begin{equation}\label{reflection_map}
    y = T(x):= x-2(x\cdot\mathrm{n}) \mathrm{n},
\end{equation}
where $\mathrm{n}$ is the outward normal of $\Gamma$ at the point $x\rho(x)$. Over all directions $x\in \Omega$, this reflecting procedure creates a far-field illumination intensity $g(y)$ on the domain $\Omega^{*}\subset \mathbb{S}^{2}$. Suppose there is no loss of energy in the reflection. Then by the energy conservation law, the map $T$ defined by (\ref{reflection_map}) is measure preserving, i.e., 
\begin{equation}\label{mp}
    \int_{T^{-1}(B)}f(x)\mathrm{d}x=\int_{B}g(y)\mathrm{d}y, \quad \forall\text{ Borel set }B\subset \Omega^{*},
\end{equation}
which is denoted by $T_{\#} f = g$, the pushforward measure of $f$. Suppose that the source intensity $f$ and the target intensity $g$ have been given in advance. For the above far-field reflector system, our goal is to reconstruct the reflecting surface $\Gamma$ that is capable of producing the prescribed illumination distribution $g(y)$ on the region $\Omega^{*}$ of a far-field sphere (see Figure \ref{system}).

\begin{figure*}[h]
    \centering
    \includegraphics[width=0.6\textwidth]{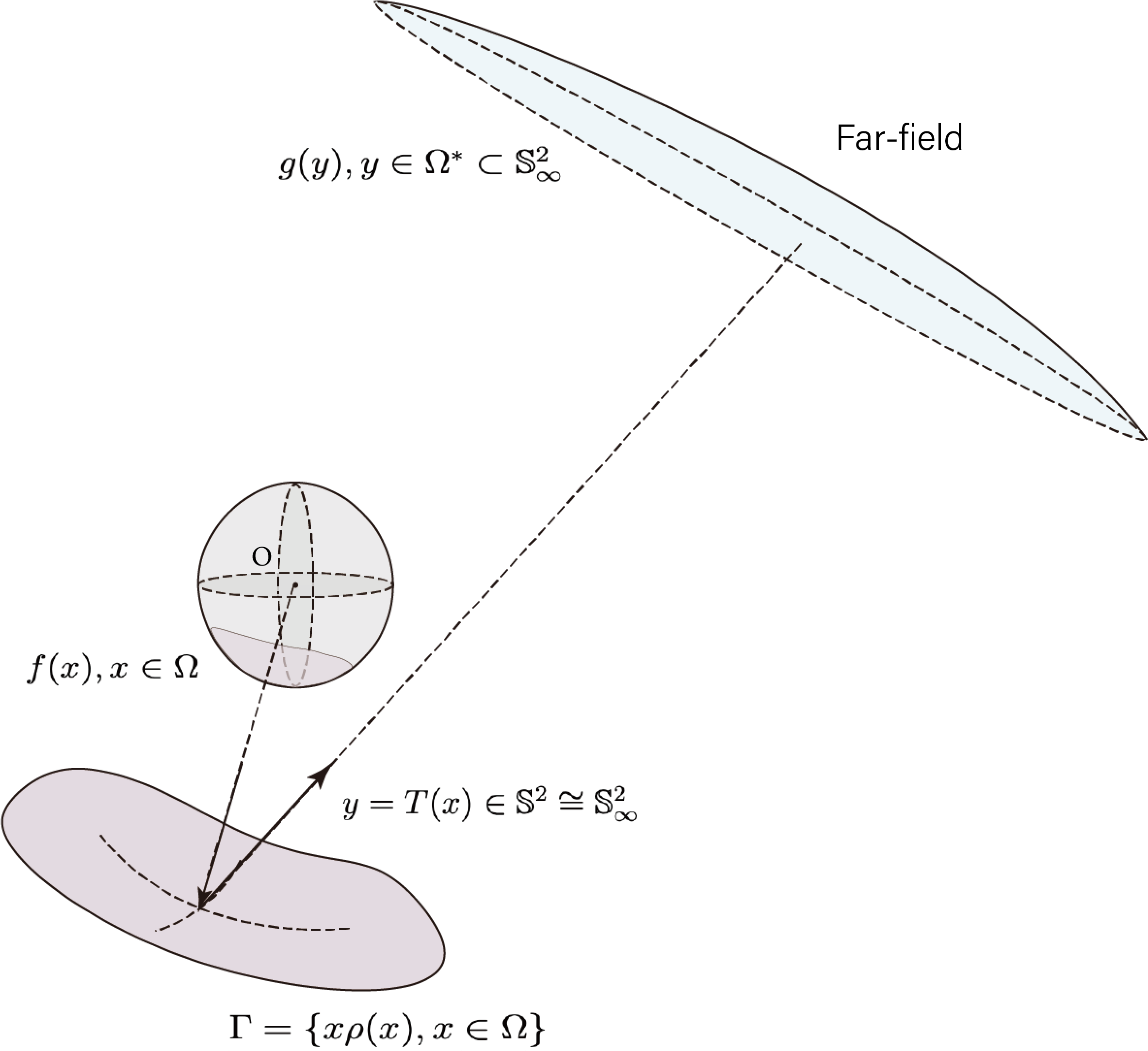}
    \caption{Far-field reflector system. The reflecting surface $\Gamma$ receives the source intensity $f(x)$ and produces the target intensity $g(y)$, where $x,y \in \mathbb{S}^{2}$. }\label{system} 
\end{figure*}

The measure-preserving property (\ref{mp}) can be expressed as the Jacobian equation following by applying the change of variables,
\begin{equation}\label{c_JO}
g\left(T(x)\right) \left|\operatorname{det}(\nabla T)\right| = f(x).
\end{equation}
The map $T$ depends on the radial distance function $\rho$ through the relation $\mathrm{n} =(\nabla\rho-\rho x)/\sqrt{\rho^{2}+|\nabla \rho|^2}$. Hence by (\ref{reflection_map}),
\begin{equation}\label{reflection_map2}
    T(x) = \frac{\nabla \rho - \left(\eta -\rho\right)x}{\eta}, \qquad \eta=\left(|\nabla \rho|^2+\rho^2\right) / 2 \rho.
\end{equation}
Through (\ref{c_JO}), \cite{wang1996design} yields a fully nonlinear partial differential equation in local coordinates,
\begin{equation}\label{far}
\begin{aligned}
    \frac{1}{\eta^{2}} \operatorname{det}\left(-D^2 \rho+\frac{2}{\rho} D \rho \otimes D \rho-(\rho -\eta)I\right)&=\frac{f(x)}{g(T(x))}, \qquad x\in\Omega,\\
    T(\Omega) &= \Omega^*.
\end{aligned}
\end{equation}
The existence, uniqueness and regularity of (\ref{far}) have been studied in \cite{caffarelli2008regularity, wang1996design, wang2004design}.

The numerical strategies for solving the far-field reflector problem can be roughly categorized as the ray-mapping method \cite{bosel2017single, desnijder2019ray, feng2016freeform, Fournier2010fast}, the supporting quadric method \cite{de2016far, doskolovich2022supporting, kochengin1998determination, kochengin2003computational, meyron2018light}, and the optimal transport method. The ray-mapping method consists in building a ray mapping between the source and the target, and then constructing the surface that is capable of producing this map following (\ref{reflection_map2}). However, an integrability condition should be satisfied to ensure the existence of the surface, which is difficult to verify theoretically. For the supporting quadric method, the optical surface is constructed by taking the envelope of paraboloids of revolution that are controlled by points in the target area, leading to a computational complexity up to $O\left(\frac{N^4}{\epsilon} \log \frac{N^2}{\epsilon}\right)$ \cite{kochengin2003computational}.

It was proved in \cite{glimm2003optical, wang2004design} that the far-field reflector problem is equivalent to the optimal transport problem with a logarithmic type cost function. In fact, this OT is a linear assignment problem, which can be solved using the auction algorithm \cite{bykov2018linear} or the Hungarian algorithm with a complexity of $O(N^3)$. On the other hand, the entropic regularized OT can be solved by the Sinkhorn algorithm \cite{benamou2020entropic}, which reduces the computational complexity to $O(N^2)$. However, the existence of the regularizer severely degrades the accuracy of the solution. Using OT, the equation (\ref{far}) can be simplified into a generalized Monge-Ampère equation. In \cite{romijn2021iterative}, the equation is solved by the least-squares method, which is stable but computationally complex and slow to converge. The work \cite{hamfeldt2021convergent} offered a convergent finite difference method. However, due to the complexity of constructing a monotonic scheme in local coordinates and the slow convergence rate of the first-order Euler method, this approach results in a high computational cost. In \cite{brix2015solving} and \cite{wu2013freeform}, the equation is discretized using the B-spline scheme and the finite difference scheme, respectively, and solved with Newton-type methods. The convergence of these methods is highly dependent on the choice of the initial value and the regularity of the illumination distributions.

\subsection{Main results}

The existing methods focus on solving the standard Monge-Ampère equation $\operatorname{det}\left(I+D^2u\right)= f$ on the plane \cite{benamou2014numerical, feng2009mixed, loeper2005numerical, prins2015least, sulman2011efficient}. In \cite{awanou2015standard}, a first-order relaxation method has been proposed to solve the standard Monge-Ampère equation, given by
\begin{equation}\label{c_des}
    u^{k+1} = u^{k} + \Delta^{-1}\text{MA}\left(x, \nabla u^{k},D^{2}u^{k}\right),
\end{equation}
where $\text{MA}(x,\nabla u, D^{2}u)=f-\operatorname{det}\left(I+D^2u\right)$. In particular, (\ref{c_des}) is implemented using the finite element method, and its convergence is established by demonstrating that \eqref{c_des} constitutes a fixed point scheme. Later, \cite{mcrae2018optimal} extended this approach numerically to solve the $L^2$-Monge-Ampère equation on the sphere using the finite element method, addressing the challenge of mesh adaptivity. A similar approach was employed in \cite{bao2024optimal} to compute the quadratic Wasserstein distance for applications in inverse problems. However, the generalized Monge-Ampère equation associated with (\ref{far}) is significantly more complicated, where 
\begin{equation}\label{GMA1}
\text{MA}(x,\nabla u, D^{2}u)=f(x)-\tilde{g}(x,\nabla u)\operatorname{det}\left(D^2u+A(x,\nabla u)\right).
\end{equation}
The high nonlinearity of the equation makes the previous fixed point method ineffective for proving the convergence of (\ref{c_des}) in the case of (\ref{GMA1}). Moreover, no convergence analysis is available, further complicating its numerical treatment.

In this work, we transform the far field reflector problem (\ref{far}) into an optimal transport problem, which is equivalent to a generalized Monge-Ampère equation. We apply (\ref{c_des}) to solve this equation and rigorously establish the convergence analysis of the method. By introducing the duality theory of OT, we demonstrate that the generalized Monge-Ampère operator in (\ref{GMA1}) corresponds to the gradient of the dual functional of OT. Consequently, $(-\Delta)^{-1}\text{MA}$ is identified as the $\dot{H}^{1}$ gradient in this context, and (\ref{c_des}) is shown to be a descent scheme that guarantees the strict decrease of the dual functional, ultimately converging to the solution of the equation. Based on this, we refer to this approach as the Sobolev gradient method. It should be noted that our analysis also provides a novel convergence proof for the scheme (\ref{c_des}) in the context of Monge-Ampère equations discussed in \cite{awanou2015standard, mcrae2018optimal}. Numerical experiments of this approach for the far-field problem can be implemented by adapting the finite element method from \cite{mcrae2018optimal}.

There are several advantages associated with the proposed method. First, (\ref{c_des}) is a Poisson equation, which can be solved by numerous fast algorithms with a computational complexity as low as $O(N)$. Both the theoretical proof and the numerical results can verify that the Sobolev gradient speeds up the convergence. Besides, different from the Newton-type methods, (\ref{c_des}) does not need to compute the Jacobian matrix, which could be nontrivial for singular intensities. Compared with other methods, the approach proposed in this work is easier to accomplish, avoiding the complicated pre-calculations of numerical schemes. It provides an efficient way to solve the freeform reflector problem.

The paper is organized as follows. Section 2 introduces the relation between the optimal transport problem and the far-field reflector design. Section 3 derives the form of (\ref{GMA1}), which is related to the gradient of the dual functional of OT. Section 4 is devoted to proving the convergence of the Sobolev gradient descent scheme (\ref{c_des}). In Section 5, the numerical algorithm for solving the far-field reflector problem is introduced. Section 6 provides some numerical examples to demonstrate the feasibility and effectiveness of the method.

\subsection*{Notation and preliminaries}
For a matrix $\omega$, its subscripts $\omega_{ij}$ denotes the $(i,j)^{th}$ entry of $\omega$ and superscripts $\omega^{ij}$ denotes the $(i,j)^{th}$ entry of $\omega^{-1}$. For a matrix function $\omega(x)\in C(\Omega)^{n\times n}$, we define its norm $\|\omega\|_{p,\Omega}$ by
\begin{equation}\label{norm_defn}
   \|\omega\|_{p,\Omega}: =\sup_{x\in\Omega} \|\omega(x)\|_{p}, \quad \text{where }\|\omega(x)\|_{p}:= \sup_{b\in\mathbb{R}^{n}/0} \frac{\|\omega(x)\cdot b\|_{p}}{\|b\|_{p}}
\end{equation}
is induced by the $p$-norm for vectors.

The notation $D_{i}$ denotes the $i^{th}$ derivative in local coordinates, and the variable is specified in the subscript when the derivative operates on a multivariable function. In particular, for the cost function $c(x,y)$ in the optimal transport problem, we use the notation
\begin{equation}\label{deri_defn}
    c_{i j \cdots, k l \cdots}:=D_{x_i} D_{x_j} \cdots D_{y_k}D_{y_l} \cdots c,
\end{equation}
to simplify the notation of derivatives. In addition, the superscripts $c^{i,j}$ denotes the $(i,j)^{th}$ entry of the inverse matrix of $D_{xy}^{2}c= \left(c_{i,j}^{}\right)$. The function space
\begin{equation}
    \tilde{C}^{k,\alpha}(\Omega):=\left\{u\in C^{k,\alpha}(\Omega):\int_{\Omega} u(x)\mathrm{d}x= 0 \right\},
\end{equation}
is used to denote the $C^{k,\alpha}$ space with zero mass on the domain.

\section{Freeform design and Optimal Transport}
This section briefly reviews the basic concepts of the optimal transport problem and its connection with the far-field reflector design.

\subsection{Optimal transport problem}
Suppose that $\mathcal{X}$, $\mathcal{Y}$ are complete and separable metric spaces and $f$, $g$ are probability density functions defined on $\mathcal{X}$ and $\mathcal{Y}$, respectively. The cost function $c(x,y):\mathcal{X}\times \mathcal{Y}\rightarrow \mathbb{R}$ indicates the cost of transporting a unit of mass from $x$ to $y$. Then Monge's optimal transport problem seeks the most efficient way of rearranging $f$ into $g$, which is to minimize 
\begin{equation}\label{primal}
     \inf_{T\in \Pi(f, g)} \int_{\mathcal{X} \times \mathcal{Y}} c(x, T(x)) f(x)\mathrm{d} x,\\
\end{equation}
\[\begin{aligned}
\text{where}\qquad \Pi(f,g):=\bigg\{T:\mathcal{X}\rightarrow \mathcal{Y}: &\;\;T_{\#}f =g,\;\text{i.e.,}\\
&\int_{T^{-1}(B)}f(x)=\int_{B}g(y),\quad \forall B\subset \mathcal{Y}\bigg\},
\end{aligned}\]
is the set of all measure-preserving maps. The existence and uniqueness of the optimal map in (\ref{primal}) have been established for different cost functions on the Euclidean space and compact manifolds, including $c(x,y)=\frac{1}{2}|x-y|^2$ on $\mathbb{R}^{d}$ \cite{brenier1991ploar, mccann2001polar}, $c(x,y)= -\log(1-x\cdot y)$ on $\mathbb{S}^{2}$, etc.

The dual formulation of the optimal transport problem is 
\begin{equation}\label{dual}
\begin{aligned}
    &\inf_{(u, v)}\, I(u,v):=\int_\mathcal{X} u(x) f(x)\mathrm{d} x+\int_\mathcal{Y} v(y) g(y)\mathrm{d} y,\\
    &(u,v)\in C(\mathcal{X})\times C(\mathcal{Y}),\quad u(x) +v(y)+ c(x,y)\geq 0.
\end{aligned}
\end{equation}
This is a linear programming problem, which is desirable for designing numerical algorithms.
The standard duality result shows that the infimum in (\ref{primal}) is equal to the negative of the infimum in (\ref{dual}).

\subsection{OT interpretation of the far-field reflector problem}

Considering $\mathcal{X} = \mathcal{Y} = \mathbb{S}^{2}$, the probability density functions $f$ and $g$ are the source intensity and target intensity in the far-field reflector problem, respectively. Then, the far-field problem (\ref{far}) is equivalent to the optimal transport problem.
\begin{theorem}[\cite{glimm2003optical, wang2004design}]\label{OT_linkA}
Suppose that $f$ and $g$ are bounded positive functions on the connected domains $\Omega\subset\mathbb{S}_{-}^{2}$ and $\Omega^*\subset \mathbb{S}_{+}^{2}$ respectively. Then there is a minimizer $(u,v)$ of the dual problem (\ref{dual}) for the cost function $c(x,y)=-\log(1-x\cdot y)$. In particular, $(u,v)$ is unique up to a constant, and $\rho = e^{-u}$ solves the reflector problem (\ref{far}).
\end{theorem}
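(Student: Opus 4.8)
The plan is to proceed in four stages: produce a minimizer of the dual problem (\ref{dual}) by Kantorovich duality, extract from it the optimal transport map, identify this map with the physical reflection law (\ref{reflection_map2}), and finally verify that $\rho = e^{-u}$ satisfies the Monge--Amp\`ere type equation (\ref{far}).

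First I would establish existence of the dual minimizer. Since $\Omega\subset\mathbb{S}^2_-$ and $\Omega^*\subset\mathbb{S}^2_+$ lie in opposite hemispheres, one has $x\neq y$ for all $(x,y)\in\bar\Omega\times\bar\Omega^*$, so $1-x\cdot y$ stays in a compact subinterval of $(0,2]$ and the cost $c(x,y)=-\log(1-x\cdot y)$ is continuous and Lipschitz there. On compact spaces with continuous cost the dual (\ref{dual}) is attained: after the sign change $\phi=-u$, $\psi=-v$ it becomes the standard Kantorovich maximization $\sup\{\int\phi f+\int\psi g:\phi(x)+\psi(y)\le c(x,y)\}$, dual to (\ref{primal}). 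Replacing any admissible pair by its $c$-transforms does not decrease the objective and yields an equi-Lipschitz maximizing sequence; Arzel\`a--Ascoli then furnishes a uniformly convergent subsequence whose limit is a minimizer $(u,v)$.

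Next I would recover the transport map. The log cost satisfies the twist (nondegeneracy) condition $y\mapsto\nabla_x c(x,y)$ injective on these hemisphere sets, which is already implicit in the invertibility of $D^2_{xy}c=(c_{i,j})$ used in the notation. Hence $u$ is $c$-concave, differentiable $f$-a.e., and at each point of differentiability the active constraint together with optimality forces
\[
\nabla u(x)+\nabla_x c\big(x,T(x)\big)=0,
\]
which by twist determines $T(x)$ uniquely; this $T$ is the optimal map and satisfies the measure-preserving relation (\ref{mp}). Uniqueness up to a constant follows because $T$, and hence $\nabla u$, is determined $f$-a.e., and connectedness of $\Omega$ upgrades this to $u$ being unique up to a single additive constant, with $v$ then fixed through the $c$-transform.

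The decisive step, and the one I expect to be the main obstacle, is showing that this analytic $T$ is exactly the geometric reflection map. Setting $\rho=e^{-u}$, so $\nabla u=-\nabla\rho/\rho$, I would compute $\nabla_x c$ for $c=-\log(1-x\cdot y)$ in local coordinates on $\mathbb{S}^2$ and substitute into the first-order condition. The geometric content is that each level surface $\{-u(x)=\mathrm{const}+c(x,y)\}$ is a confocal paraboloid of revolution $\rho(x)=e^{v(y)}/(1-x\cdot y)$ with focus $O$ and axis $y$, which reflects every ray from $O$ into the single direction $y$; the $c$-concavity of $u$ makes $\Gamma=\{x\rho(x)\}$ the envelope of the supporting paraboloid family, whose contact direction at $x$ is $T(x)$. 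Carrying out the spherical-geometry algebra should reproduce (\ref{reflection_map2}) and identify $T$ with $x-2(x\cdot\mathrm{n})\mathrm{n}$, where $\mathrm{n}=(\nabla\rho-\rho x)/\sqrt{\rho^2+|\nabla\rho|^2}$; this tying of the cost gradient to the surface normal is the technical heart of the argument. Finally, differentiating the first-order condition gives $DT=-(c_{xy})^{-1}(D^2u+c_{xx})$, and inserting this into the Jacobian form (\ref{c_JO}) of measure preservation, after rewriting $u=-\log\rho$ to convert $D^2u,\nabla u$ back into $D^2\rho,\nabla\rho$, yields the determinant operator of (\ref{far}) with $T(\Omega)=\Omega^*$ inherited from $T_\#f=g$. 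The regularity needed to read (\ref{far}) classically, rather than in the Aleksandrov sense, would be supplied by the Ma--Trudinger--Wang theory for the log cost established in the cited references.
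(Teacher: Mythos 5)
The paper does not actually prove this theorem---it imports it verbatim from the cited works of Glimm--Oliker and Wang---and your outline faithfully reconstructs exactly the argument of those references: Kantorovich duality with $c$-transform regularization and Arzel\`a--Ascoli for existence of the dual minimizer, the twist condition to define the map $f$-a.e.\ and to get uniqueness of $\nabla u$ (hence of $u$ up to a constant on the connected $\Omega$, with $v$ recovered as $u^c$), the supporting-paraboloid/envelope identification of the analytic map with the reflection law (\ref{reflection_map2}), and the Jacobian computation yielding (\ref{far}), read in the generalized sense when MTW regularity is unavailable---which matches the paper's own remark following the theorem that $\rho=e^{-u}$ is a generalized solution. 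The one point to tighten is your opening claim that $1-x\cdot y$ is bounded away from zero on $\bar\Omega\times\bar\Omega^*$: this requires the closures to be disjoint (positive separation, e.g.\ both bounded away from the equator), which is how the cited papers frame the hypothesis; as literally stated, $\Omega\subset\mathbb{S}^2_-$ and $\Omega^*\subset\mathbb{S}^2_+$ could have closures meeting at the equator, where the cost blows up and your equi-Lipschitz compactness step would need repair.
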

Therefore, by substituting $\rho = e^{-u}$ into (\ref{reflection_map2}), we can obtain the optical map in terms of the variable $u$,
\begin{equation}\label{reflector_map3}
    T(x) = \frac{(|\nabla u|^{2}-1)x-2\nabla u}{|\nabla u|^{2}+1},
\end{equation}
which depends only on the gradient of $u$. 

In addition to the cost $-\log(1-x\cdot y)$, the far-field problem can also be associated with (\ref{dual}) under the cost $c(x,y) = \log(1-x\cdot y)$.
\begin{theorem}[\cite{wang2004design}]\label{OT_linkB}
Under the same assumptions of Theorem \ref{OT_linkA}, there is a minimizer $(\varphi,\psi)$ of the dual problem (\ref{dual}) for $c(x,y)=\log(1-x\cdot y)$. The minimizer is unique up to a constant and $\rho = e^{\varphi}$ solves the reflector problem (\ref{far}).
\end{theorem}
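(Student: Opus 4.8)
The plan is to mirror the proof of Theorem~\ref{OT_linkA}, since the two statements differ only through the sign of the cost: here $c(x,y)=\log(1-x\cdot y)$ is the negative of the cost in Theorem~\ref{OT_linkA}, so minimizing the dual functional $I$ of (\ref{dual}) against it is the counterpart problem in which the inf-envelope of paraboloids gets replaced by a sup-envelope. First I would establish existence of a minimizer by general Kantorovich duality. On the separated hemispheres $\bar\Omega\subset\mathbb{S}^2_-$ and $\bar\Omega^*\subset\mathbb{S}^2_+$ one has $x\cdot y<1$ strictly, so $c$ is continuous and bounded on the compact set $\bar\Omega\times\bar\Omega^*$; since $f,g$ are bounded positive probability densities, the primal problem (\ref{primal}) admits an optimal plan and the dual (\ref{dual}) attains its infimum at a pair of $c$-conjugate Kantorovich potentials $(\varphi,\psi)$, where $\varphi(x)=\sup_{y}\bigl(-\psi(y)-c(x,y)\bigr)$ and symmetrically for $\psi$, with equality on the support of the optimal plan.

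Next I would read off the geometric-optics structure. Exponentiating the conjugacy relation gives
\[
    \rho(x)=e^{\varphi(x)}=\sup_{y\in\Omega^*}\frac{e^{-\psi(y)}}{1-x\cdot y},
\]
which realizes $\rho$ as the sup-envelope of the confocal paraboloids $r=e^{-\psi(y)}/(1-x\cdot y)$, each focused at $O$ and reflecting every incoming ray into the fixed direction $y$. Differentiating the active constraint $\varphi(x)+\psi(T(x))+c(x,T(x))=0$ at a point of differentiability gives the first-order condition $\nabla\varphi(x)+\nabla_x c(x,T(x))=0$, which recovers the reflection map and, after substituting $\rho=e^{\varphi}$, the explicit form (\ref{reflector_map3}); by Snell's law the envelope surface reflects the ray in direction $x$ into exactly $T(x)$.

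Finally I would verify the equation and uniqueness. The marginal conditions of the optimal plan give the pushforward $T_\#f=g$, i.e.\ the measure-preserving relation (\ref{mp}); combined with the change of variables (\ref{c_JO}) and the map (\ref{reflector_map3}), this is precisely the generalized Monge-Amp\`ere equation (\ref{far}), so $\rho=e^{\varphi}$ solves the reflector problem. Uniqueness up to an additive constant follows, as in Theorem~\ref{OT_linkA}, from the connectedness of $\Omega$ together with the $c$-cyclical monotonicity of the optimal support, which forces two potentials compatible with the same plan to differ by a constant.

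The step I expect to be delicate is not existence but the verification that the sup-envelope, rather than the inf-envelope of Theorem~\ref{OT_linkA}, still produces a bona fide reflector solving (\ref{far}) with the correct orientation. One must check that selecting the supremum yields the correct sign of the determinant in (\ref{far}) and the correct outward normal $\mathrm{n}$, and that the $c$-concavity of $\varphi$ is strong enough, via Aleksandrov's theorem, to justify the second-order change of variables almost everywhere. Controlling these sign and regularity issues, which is exactly where the two cost functions genuinely diverge, is the main obstacle.
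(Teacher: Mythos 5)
The paper offers no proof of this statement---it is quoted directly from \cite{wang2004design} (see also \cite{glimm2003optical})---and your sketch correctly reconstructs the standard argument of those references: Kantorovich duality on the compact, separated domains for existence of $c$-conjugate potentials $(\varphi,\psi)$, the sup-envelope $e^{\varphi(x)}=\sup_{y}e^{-\psi(y)}/(1-x\cdot y)$ of confocal paraboloids as the type-B reflector, the first-order condition $\nabla\varphi+\nabla_x c(x,T(x))=0$ for the reflection map, the marginal condition $T_{\#}f=g$ plus the change of variables for (\ref{far}), and uniqueness up to a constant from connectedness of $\Omega$, with the sign/orientation of the envelope correctly flagged as the only point where the two costs genuinely diverge. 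One small correction: for $\rho=e^{\varphi}$ the optical map is not (\ref{reflector_map3}) verbatim but its sign-flipped counterpart obtained by substituting $u=-\varphi$, namely $T(x)=\bigl((1-|\nabla\varphi|^{2})x+2\nabla\varphi\bigr)/\bigl(1+|\nabla\varphi|^{2}\bigr)$, since the minimizers of the two dual problems are in general not negatives of each other.
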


According to \cite{wang2004design}, the solution of (\ref{far}) must be a constant multiplication of $\rho = e^{-u}$ of Theorem \ref{OT_linkA} or $\rho=e^{\varphi}$ of Theorem \ref{OT_linkB}. In this work, we mainly focus on the case $\rho = e^{-u}$. Here, the minimizer $u$ is Lipschitz continuous but not necessarily $C^1$ smooth, and thus $\rho = e^{-u}$ is understood as a generalized solution \cite{wang1996design} of (\ref{far}). The regularity theorem \cite{ma2005regularity} of OT implies that the smoothness of $u$ depends on the smoothness of $f,g$ and geometric structures of $\Omega$, $\Omega^{*}$. 

The relationship between the far-field reflector problem and the primal form (\ref{primal}) of OT can also be developed, as detailed in Corollary \ref{OT_linkC}. It is a direct result of OT's duality theory. 
\begin{corollary}[\cite{glimm2003optical, wang2004design}]\label{OT_linkC}
Suppose the same conditions as in Theorem \ref{OT_linkA} hold. Then the optical map $T$ given in (\ref{reflector_map3}) is the unique measure-preserving map minimizing the functional (\ref{primal}) for the cost function $c(x,y) = -\log(1-x\cdot y)$.
\end{corollary}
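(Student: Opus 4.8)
The plan is to derive the corollary from the duality between (\ref{primal}) and (\ref{dual}), combined with the existence and essential uniqueness of the dual minimizer $(u,v)$ furnished by Theorem \ref{OT_linkA}. The three ingredients I would assemble are: weak duality, to obtain a lower bound for the transport cost along any admissible map; the complementary slackness (contact) condition, which characterizes optimal maps; and the twist property of the logarithmic cost, which recovers $T$ uniquely from $\nabla u$ and identifies it with (\ref{reflector_map3}).

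First I would record weak duality. For any measure-preserving $T\in\Pi(f,g)$ and any admissible pair $(u,v)$ with $u(x)+v(y)+c(x,y)\ge 0$, the pointwise bound $c(x,T(x))\ge -u(x)-v(T(x))$, integrated against $f$ and combined with $T_\#f=g$, gives
\[
\int_{\Omega} c(x,T(x))\,f(x)\,\mathrm{d}x \;\ge\; -\int_\Omega u\,f\,\mathrm{d}x-\int_{\Omega^*} v\,g\,\mathrm{d}y \;=\; -I(u,v).
\]
The stated strong duality (the primal infimum equals $-\inf I$) then shows that a measure-preserving $T$ is optimal if and only if this inequality is saturated for the dual minimizer $(u,v)$ of Theorem \ref{OT_linkA}, i.e.
\[
u(x)+v(T(x))+c(x,T(x))=0\qquad\text{for } f\text{-a.e. } x.
\]

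Next I would extract the first-order condition from this contact identity. Since $u(x')+v(y)+c(x',y)\ge 0$ holds for all $x'$ with equality at $(x,T(x))$, for the fixed value $y=T(x)$ the function $x'\mapsto u(x')+c(x',y)$ attains its minimum at $x'=x$. As $u$ is Lipschitz it is differentiable almost everywhere by Rademacher's theorem, so at a.e.\ $x$,
\[
\nabla u(x)+c_x\big(x,T(x)\big)=0.
\]
For $c(x,y)=-\log(1-x\cdot y)$ the cost is twisted, meaning $y\mapsto c_x(x,y)$ is injective; hence this relation determines $T(x)$ uniquely from $\nabla u(x)$, and carrying out the inversion reproduces precisely the optical map (\ref{reflector_map3}). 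This establishes that the map in (\ref{reflector_map3}) is a minimizer of (\ref{primal}).

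For uniqueness I would invoke that $(u,v)$ is unique up to an additive constant: shifting $u$ by a constant leaves $\nabla u$, and therefore $T$, unchanged, so the map produced by the twist inversion is well defined, and any other optimal map must satisfy the same contact and first-order relations with this essentially unique $u$, forcing agreement a.e.\ with (\ref{reflector_map3}). The main obstacle is the careful handling of the twist/inversion step on the sphere in local coordinates --- verifying that $y\mapsto c_x(x,y)$ is genuinely injective over the relevant hemispheres $\mathbb{S}^2_{\mp}$ and that solving $\nabla u=-c_x(\,\cdot\,,T)$ yields exactly (\ref{reflector_map3}) --- together with the measure-theoretic bookkeeping needed because $u$ is only Lipschitz, so the contact and first-order identities hold merely almost everywhere rather than pointwise.
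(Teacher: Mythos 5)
Your uniqueness half is sound and follows exactly the route the paper intends (the paper gives no proof of its own, citing \cite{glimm2003optical, wang2004design} and calling the corollary ``a direct result of OT's duality theory''): weak duality, complementary slackness at the dual minimizer, Rademacher differentiability plus the first-order condition $\nabla u(x)+\nabla_x c(x,T(x))=0$, and the twist condition together force any measure-preserving minimizer to agree $f$-a.e.\ with (\ref{reflector_map3}). The existence half, however, contains a genuine logical gap: from ``every minimizer must coincide a.e.\ with the map (\ref{reflector_map3})'' you conclude ``this establishes that the map in (\ref{reflector_map3}) is a minimizer,'' which is a non sequitur --- an argument that pins down what a minimizer would have to look like does not produce one. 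Concretely, two steps are missing. First, admissibility: you never verify $T_{\#}f=g$ for the candidate map, yet all of your weak-duality and contact computations quantify only over maps already assumed to lie in $\Pi(f,g)$, so they say nothing about whether (\ref{reflector_map3}) belongs to the feasible set of (\ref{primal}). Second, saturation for this particular map: you need $u(x)+v(T(x))+c(x,T(x))=0$ for $f$-a.e.\ $x$, and this does not follow from the first-order relation alone --- stationarity is necessary at a contact point but does not by itself exhibit one.

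Both gaps are fixable from ingredients already in the paper. Measure preservation is precisely the content of the clause in Theorem \ref{OT_linkA} asserting that $\rho=e^{-u}$ solves the reflector problem (\ref{far}): equation (\ref{far}) is the Jacobian form (\ref{c_JO}) of the conservation law (\ref{mp}), so the associated map --- which under $\rho=e^{-u}$ is exactly (\ref{reflector_map3}) --- satisfies $T_{\#}f=g$ in the generalized sense. (Equivalently, the first-variation identity of Proposition \ref{dual_grad}, $\mathcal{J}^{\prime}(u)=f-g(T_u)\left|\operatorname{det}(\nabla T_u)\right|=0$ at the dual minimizer, expresses the same fact weakly.) Saturation then follows from the $c$-convexity of the dual minimizer: taking $v=u^{c}$, for every $x_0$ there is a contact point $y_0$ as in (\ref{opps}), and at the ($f$-a.e.) points of differentiability of $u$ the first-order condition together with the twist condition identifies $y_0=T_u(x_0)$; integrating the contact identity and using $T_{\#}f=g$ yields $\int c(x,T_u(x))f(x)\,\mathrm{d}x=-I(u,u^{c})$, so the duality bound is attained. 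With these two insertions your proposal is complete and coincides with the standard duality proof behind the paper's citation.
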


In this work, we aim to solve the freeform design problem for the far-field problem using a gradient descent method minimizing (\ref{dual}).

\section{Gradient method of the dual problem}

According to Theorem \ref{OT_linkA}, by substituting $\rho = e^{-u}$, the problem (\ref{far}) is transformed into a generalized Monge-Ampère equation (\ref{MAO}) in (\ref{GMA1}),
\begin{equation}\label{MAO}
    f(x)-\tilde{g}\left(x,\nabla u\right)\operatorname{det}\left(D^{2}u + A(x, \nabla u)\right) = 0.
\end{equation}
The explicit expressions of $\tilde{g}$ and $A$ are given Proposition \ref{dual_grad} of this section.

Next, we prove that the expression of (\ref{MAO}) is exactly the gradient of the dual functional and thus (\ref{MAO}) can be viewed as the first order optimality condition of (\ref{dual}). Based on this fact, we can design gradient methods for optimizing (\ref{dual}).

Suppose that $\mathcal{X} = \mathcal{Y}$ is the sphere $\mathbb{S}^{n}$ or the flat torus $\mathbb{T}^{n} = \mathbb{R}^n / \mathbb{Z}^n$, $n\in \mathbb{Z}^{+}$. To ensure the regularity of the solution to the optimal transport problem, we assume that the cost function $c(x,y)\in C^{4\times 4}(\mathcal{X}\times \mathcal{Y})$ is locally semi-concave, and satisfies Ma-Trudinger-Wang (MTW) condition in \cite{ma2005regularity, villani2009optimal} as well as the twist condition,
\begin{itemize}
    \item $x\mapsto \nabla_{x}c(x, y)$ is injective for $(x,y)\in\mathcal{X}\times \mathcal{Y}$;
    \item $\operatorname{det}D_{xy}^{2}c(x,y)\neq 0$ on $\mathcal{X}\times\mathcal{Y}$.
\end{itemize}

First, we introduce the notation of $c$-transform, which provides an explicit relationship between $u$ and $v$ in (\ref{dual}), thereby defining the transport map and simplifying the optimization form of (\ref{dual}).
\begin{definition}
For $u:\mathcal{X}\rightarrow \mathbb{R}$ and $v:\mathcal{Y}\rightarrow \mathbb{R}$, their $c$-transforms are defined by
    \begin{equation}\label{c_trans}
     \begin{aligned}
     u^{c}(y) = \sup_{x\in \mathcal{X}} -c(x,y) - u(x)\\
     v^{c}(x) = \sup_{y\in \mathcal{Y}} -c(x,y) - v(y)
     \end{aligned}
    \end{equation}
A function $u$ (or $v$) is said to be $c$-convex if $u= u^{cc}$ (or $v= v^{cc}$). 
\end{definition}
It can be verified that $c$-convex functions are Lipschitz continuous \cite{villani2009optimal}. For a $c$-convex function $u$ and any $x_{0}\in \mathcal{X}$, there exists $y_{0}\in \mathcal{Y}$ such that  
\begin{equation}\label{opps}
\begin{aligned}
\begin{cases}
u(x_{0}) + u^{c}(y_{0}) + c(x_{0}, y_{0}) &=0,\\
u(x) + u^{c}(y_{0}) + c(x,y_{0}) &\geq 0,\quad \forall x\in \mathcal{X}.
\end{cases}
\end{aligned}
\end{equation}
If $u\in C^2(\mathcal{X})$, the optimality conditions of (\ref{opps}) imply that  
\begin{equation}\label{optimal_cond1}
    \nabla u (x_{0}) + \nabla_{x} c(x_{0},y_{0}) =0,
\end{equation}
\begin{equation}\label{optimal_cond2}
    D^{2}u(x_{0}) + D_{xx}^{2}c(x_{0},y_{0}) \geq 0.
\end{equation}
From (\ref{optimal_cond1}), we can define a map $T_{u}:\mathcal{X}\rightarrow \mathcal{Y}$ by
\begin{equation}\label{optimal_map}
    T_{u}(x) = \mathcal{T}(x,\nabla u), \quad \text{where }\mathcal{T}(x,p):= (\nabla_{x}c)^{-1}(x,-p),
\end{equation}
which maps each $x_0$ to the corresponding $y_0$. In fact, based on (\ref{optimal_cond2}), we can also propose a sufficient condition to ensure the $c$-convexity of $u\in C^{2}(\mathcal{X})$, stated in the following lemma.
\begin{lemma}[\cite{kim2012parabolic}]
Suppose that $u\in C^{2}(\mathcal{X})$ and
\begin{equation}\label{c_convex}
    D^{2}u(x) + A\left(x,\nabla u(x)\right) > \lambda ,\quad \forall x\in \mathcal{X},
\end{equation}
for some $\lambda>0$, where $A(x,p):=D_{xx}^{2}c(x,\mathcal{T}(x,p))$. Then $u$ is $c$-convex and $T_{u}$ in (\ref{optimal_map}) is a diffeomorphism. The equality $ u(x)+u^{c}(y)+c(x,y)=0$ holds if and only if $y= T_{u}(x)$.
\end{lemma}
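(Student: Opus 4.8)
I would prove the three assertions, the local diffeomorphism of $T_u$, the $c$-convexity of $u$ together with the equality characterization, and finally the global diffeomorphism, in that order. First I would show that $T_u$ is a \emph{local} diffeomorphism by implicit differentiation. By (\ref{optimal_map}) and (\ref{optimal_cond1}), the map $T_u(x)=\mathcal{T}(x,\nabla u(x))$ satisfies $\nabla_x c(x,T_u(x))=-\nabla u(x)$; differentiating in $x$ yields
\[ D_{xx}^2 c\big(x,T_u(x)\big)+D_{xy}^2 c\big(x,T_u(x)\big)\,DT_u(x)=-D^2u(x). \]
Since $T_u(x)=\mathcal{T}(x,\nabla u(x))$, we have $D_{xx}^2c(x,T_u(x))=A(x,\nabla u(x))$, so
\[ DT_u(x)=-\big(D_{xy}^2 c\big)^{-1}\big(D^2u(x)+A(x,\nabla u(x))\big). \]
The nondegeneracy assumption $\det D_{xy}^2 c\neq 0$ makes the first factor invertible, and (\ref{c_convex}) makes the second factor positive definite; hence $DT_u(x)$ is nonsingular for every $x$, and $T_u$ is a smooth local diffeomorphism. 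As $\mathcal{X}$ is compact and $\mathcal{Y}$ connected, $T_u(\mathcal{X})$ is both open and closed, so $T_u$ is onto, i.e. a covering map.

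The heart of the proof is the $c$-convexity. Fix $x_0\in\mathcal{X}$, set $y_0:=T_u(x_0)$, and let $g(x):=u(x)+c(x,y_0)$. By (\ref{c_trans}) it suffices to show $x_0$ is a \emph{global} minimizer of $g$: then $u(x_0)+c(x_0,y_0)=\inf_x g(x)=-u^c(y_0)$, so $u(x_0)+u^c(y_0)+c(x_0,y_0)=0$, which gives $u^{cc}(x_0)\ge u(x_0)$; since $u^{cc}\le u$ always holds, $u=u^{cc}$ is $c$-convex. Now (\ref{optimal_cond1}) gives $\nabla g(x_0)=0$ and (\ref{c_convex}) gives $D^2g(x_0)=D^2u(x_0)+A(x_0,\nabla u(x_0))>\lambda I$, so $x_0$ is a \emph{strict local} minimizer. \textbf{The main obstacle is upgrading this to a global minimum.} The inequality (\ref{c_convex}) controls $D^2g$ only at $x_0$, because at other points $x$ the matrix $A(x,\nabla u(x))$ involves $T_u(x)\neq y_0$ rather than $D_{xx}^2c(x,y_0)$; thus $g$ need not be convex and a naive second-derivative test fails. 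This is exactly where the MTW condition is indispensable: I would invoke Loeper's maximum principle for regular costs \cite{ma2005regularity, villani2009optimal}, under which the strict inequality (\ref{c_convex}) propagates along $c$-segments and forbids any competing lower value, forcing the stationary point $x_0$ with positive-definite $c$-Hessian to be the global minimizer. Without MTW this local-to-global implication is false, so no purely local computation can replace it.

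Finally I would establish the equality characterization and the global diffeomorphism. If $u(x_0)+u^c(y_1)+c(x_0,y_1)=0$ for some $y_1$, then $x_0$ minimizes $x\mapsto u(x)+c(x,y_1)$, whence $\nabla u(x_0)+\nabla_x c(x_0,y_1)=0$; comparing with $\nabla u(x_0)+\nabla_x c(x_0,y_0)=0$ and using injectivity of $y\mapsto\nabla_x c(x_0,y)$, which is precisely the twist/nondegeneracy condition making $\mathcal{T}(x_0,\cdot)$ well defined, forces $y_1=y_0=T_u(x_0)$. Hence $u(x)+u^c(y)+c(x,y)=0$ holds if and only if $y=T_u(x)$, the stated characterization. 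For injectivity of $T_u$, if $T_u(x_0)=T_u(x_1)=y_0$ then by the previous part both $x_0,x_1$ globally minimize $u(\cdot)+c(\cdot,y_0)$, and the same MTW principle (now excluding a second strict minimizer) gives $x_0=x_1$; equivalently, the displayed formula shows $\det DT_u$ has constant sign, so the covering $T_u$ has a single sheet, on $\mathbb{S}^n$ ($n\ge 2$) since $\mathcal{Y}$ is simply connected, and on $\mathbb{T}^n$ since $T_u$ is homotopic to the identity through $x\mapsto\mathcal{T}(x,s\nabla u(x))$, $s\in[0,1]$. A surjective injective local diffeomorphism is a diffeomorphism, completing all three assertions.
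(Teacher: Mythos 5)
The paper offers no proof of this lemma at all: it is imported verbatim from Kim--Streets--Warren \cite{kim2012parabolic}, so there is no internal argument to match yours against. Judged on its own, your outline is consistent with how the result is actually proved in that literature. Your computation $DT_u=-\left(D^2_{xy}c\right)^{-1}\left(D^2u+A(x,\nabla u)\right)$ is exactly the identity the paper itself uses later as (\ref{grad_T}), and your diagnosis of the crux is right: condition (\ref{c_convex}) only makes $x_0$ a strict local minimizer of $g(x)=u(x)+c(x,y_0)$, and upgrading local $c$-support to global $c$-support is precisely the MTW-dependent step (Loeper's maximum principle / Trudinger--Wang; Theorem 12.46 in \cite{villani2009optimal}). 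Delegating that step to a citation is defensible here, since the paper itself treats the whole lemma as a citation; but be aware you are invoking, not proving, the entire substance of the $c$-convexity claim, and that the cited principle also needs the relevant $c$-segments to stay where the cost is smooth --- a twisted cost can never be globally smooth on a compact manifold (an injective map $y\mapsto\nabla_x c(x,y)$ from a closed $n$-manifold into $\mathbb{R}^n$ is impossible), which is why the paper restricts its estimates to $\operatorname{G}_{\lambda}$ and discusses $\operatorname{sing}(c)$ at the end of Section 4. Your equality characterization via the twist condition is correct and clean.

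One genuine soft spot: the assertion that MTW ``excludes a second strict minimizer'' is not a known principle in that form and is unsubstantiated as written. Fortunately it is also unnecessary. Note that by the twist condition, \emph{every} critical point $x^*$ of $g(x)=u(x)+c(x,y_0)$ satisfies $T_u(x^*)=y_0$, hence by (\ref{c_convex}) has $D^2g(x^*)>\lambda>0$, so all critical points of $g$ are nondegenerate strict local minima; two such minima on a connected compact manifold would force an additional critical point of mountain-pass type that is not a local minimum (Pucci--Serrin), a contradiction --- giving injectivity without any appeal to MTW. Alternatively, your topological route is sound: a local diffeomorphism of a compact manifold is a covering map, trivial over simply connected $\mathbb{S}^n$ ($n\geq 2$); for $\mathbb{T}^n$ your homotopy $x\mapsto\mathcal{T}(x,s\nabla u(x))$ does the job, but you should verify it is well defined, i.e. that $-s\nabla u(x)$ remains in the range of $\nabla_x c(x,\cdot)$ for all $s\in[0,1]$, which does not follow from the stated hypotheses alone. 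With either repair the injectivity argument closes, and the proposal as a whole is a faithful sketch of the cited proof rather than a self-contained one.
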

In general, $u\in C^2(\mathcal{X})$ is said to be strictly $c$-convex with respect to $\lambda$ if (\ref{c_convex}) is satisfied. For any function $u$, since $I(u^{cc},u^c)\leq I(u,u^{c}) \leq I(u,v)$ , it is possible to reduce (\ref{dual}) to the optimization problem depending on a single variable $u$, see \cite{villani2009optimal}.

\begin{proposition}\label{dual_grad}
The dual problem (\ref{dual}) is equivalent to maximizing the functional $\mathcal{J}$ over the set of $c$-convex functions:
\begin{equation}\label{reduced_dual}
        \mathcal{J}(u): = \int_{\mathcal{X}}u(x)f(x)\mathrm{d} x + \int_{\mathcal{Y}} u^{c}(y)g(y)\mathrm{d} y.
     \end{equation}
     The functional $\mathcal{J}$ is convex and Lipschitz continuous. Moreover, if $u\in C^{2}(\mathcal{X})$ is strictly $c$-convex, the first variation of $\mathcal{J}$ at $u$ is given by
     \begin{equation}\label{gra_org}
         \mathcal{J}^{\prime}(u) = f - g(T_{u})\left|\operatorname{det}(\nabla T_{u})\right|, 
     \end{equation}
     where $T_{u}=\mathcal{T}(x, \nabla u)$ is the map defined in (\ref{optimal_map}). The formula (\ref{gra_org}) is equal to
     \begin{equation}\label{gra_now}
         \mathcal{J}^{\prime}(u) = f - \tilde{g}\left(x,\nabla u\right)\operatorname{det}\left(D^{2}u + A(x, \nabla u)\right),
     \end{equation}
     where $A(x,p)$ is given in (\ref{optimal_map}) and 
     \[\tilde{g}(x,p):= \frac{g(\mathcal{T}(x,p))}{\left|\operatorname{det}\left(D_{xy}^{2}c\left(x,\mathcal{T}(x,p)\right)\right)\right|}.\] 
\end{proposition}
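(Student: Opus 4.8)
The plan is to verify the four assertions in turn, treating the first-variation formula as the crux. For the reduction, I would start from $I(u,v)$ and observe that, for fixed $u$, the constraint $u(x)+v(y)+c(x,y)\ge 0$ forces $v(y)\ge\sup_x[-c(x,y)-u(x)]=u^{c}(y)$; since $g\ge 0$, the infimum over $v$ is attained at $v=u^{c}$, so $\inf_v I(u,v)=\mathcal{J}(u)$. Together with the chain $I(u^{cc},u^{c})\le I(u,u^{c})\le I(u,v)$ recalled before the statement (which uses $u^{cc}\le u$ and $f\ge 0$), this shows the optimizer may be sought among $c$-convex functions and reduces (\ref{dual}) to an optimization of $\mathcal{J}$ over that class. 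Convexity of $\mathcal{J}$ is then immediate: for each fixed $y$, $u\mapsto u^{c}(y)$ is a supremum of affine functionals of $u$, hence convex, and integrating against $g\ge 0$ and adding the linear term $\int uf$ preserves convexity. Lipschitz continuity in the sup norm follows from the elementary bound $\|u_1^{c}-u_2^{c}\|_{\infty}\le\|u_1-u_2\|_{\infty}$ together with $\int f=\int g=1$.

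The heart of the matter is the first variation (\ref{gra_org}). Fixing a strictly $c$-convex $u\in C^{2}(\mathcal{X})$ and a smooth zero-mean perturbation $\phi$, I would differentiate $\mathcal{J}(u+t\phi)$ at $t=0$: the linear term contributes $\int\phi f$, while for the transform term I write $(u+t\phi)^{c}(y)=\sup_x[-c(x,y)-u(x)-t\phi(x)]$ and apply the envelope (Danskin) theorem. Under strict $c$-convexity the preceding lemma guarantees that this supremum is attained at the unique point $x^{\ast}(y)=T_u^{-1}(y)$, so the $t$-derivative of $(u+t\phi)^{c}(y)$ at $t=0$ equals $-\phi(T_u^{-1}(y))$. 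Substituting and changing variables $y=T_u(x)$ with $\mathrm{d}y=|\det\nabla T_u|\,\mathrm{d}x$ turns $\int(u+t\phi)^{c}g$ into $-\int\phi(x)\,g(T_u(x))\,|\det\nabla T_u(x)|\,\mathrm{d}x$, and collecting the two contributions against the arbitrary $\phi$ yields $\mathcal{J}'(u)=f-g(T_u)|\det\nabla T_u|$.

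The main obstacle is making the envelope step rigorous. I must check that the supremum defining $(u+t\phi)^{c}(y)$ is attained at a unique interior maximizer depending smoothly on $y$, and that the difference quotients are dominated uniformly in $t$ so that the derivative may pass under $\int\cdot\,g\,\mathrm{d}y$. Uniqueness and the identification $x^{\ast}(y)=T_u^{-1}(y)$ come from the diffeomorphism property of $T_u$ and the equality case $u(x)+u^{c}(y)+c(x,y)=0\iff y=T_u(x)$ in the lemma; the domination follows from the uniform Lipschitz bounds on $c$-convex functions and the compactness of $\mathcal{X}$, which license dominated convergence. I would also emphasize that (\ref{gra_org}) is only the $L^2$ representative of the first variation, which is exactly what later justifies interpreting $(-\Delta)^{-1}\mathcal{J}'$ as the Sobolev gradient driving the scheme (\ref{c_des}).

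Finally, to pass from (\ref{gra_org}) to (\ref{gra_now}) I would differentiate the defining identity of $T_u$. Since $T_u=\mathcal{T}(x,\nabla u)$ satisfies $\nabla_x c(x,T_u(x))=-\nabla u(x)$, differentiating the $i$-th component in $x_j$ gives, in the comma notation, $c_{ij}(x,T_u)+\sum_k c_{i,k}(x,T_u)\,D_j(T_u)_k=-D_{ij}u$, that is $D_{xy}^{2}c(x,T_u)\,\nabla T_u=-(D^{2}u+A(x,\nabla u))$ because $A(x,\nabla u)=D_{xx}^{2}c(x,T_u)$. Taking determinants and absolute values, and using that strict $c$-convexity forces $D^{2}u+A>0$ (so its determinant is positive), gives $|\det\nabla T_u|=\det(D^{2}u+A)/|\det D_{xy}^{2}c(x,T_u)|$; multiplying by $g(T_u)$ and recognizing $\tilde{g}(x,\nabla u)=g(T_u)/|\det D_{xy}^{2}c(x,T_u)|$ yields (\ref{gra_now}). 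This last step is a routine implicit-differentiation and determinant computation.
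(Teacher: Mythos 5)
Your proposal is correct and follows essentially the same route as the paper: the same convexity and nonexpansiveness estimates, and the same envelope argument identifying the derivative of $(u+t\phi)^{c}(y)$ as $-\phi\left(T_u^{-1}(y)\right)$ --- the paper's two-sided inequality with $x_\epsilon \to x = T_u^{-1}(y)$ is precisely your Danskin step made explicit, including the dominated-convergence justification via the nonexpansiveness of the $c$-transform --- followed by the change of variables $y = T_u(x)$. The only difference is at the end: the paper dispatches the passage from (\ref{gra_org}) to (\ref{gra_now}) with a citation to \cite{villani2009optimal, loeper2011regularity}, whereas you carry out the implicit differentiation $D^{2}_{xy}c\,\nabla T_u = -\left(D^{2}u + A(x,\nabla u)\right)$ and the determinant computation explicitly; this is the same identity (\ref{grad_T}) that the paper later derives in the proof of Lemma \ref{prior_estimate}, so your computation fills in an omitted detail rather than taking a different route.
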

If the $c$-convex function $u\in C^{2}(\mathcal{X})$ achieves the infimum of the functional $\mathcal{J}$, then $T_{u}$ is exactly the optimal transport map solving (\ref{primal}) and the generalized Monge-Ampère equation (\ref{MAO}) holds. For $c(x,y) = -\log(1-x\cdot y)$, it can be checked that the optimal map $T_{u}$ computed through (\ref{optimal_map}) coincides the optical map (\ref{reflector_map3}).

\begin{proof}[Proof of Proposition \ref{dual_grad}]
The convexity of the functional follows from
\[\begin{aligned}\left(t u_1+(1-t) u_2\right)^c(y) & =\sup _{x \in \mathcal{X}}\left(-c(x,y)-t u_1(x)-(1-t) u_2(x)\right) \\ 
& \leq \sup _{x \in \mathcal{X}} t\left(-c(x,y)-u_1(x)\right)+\sup _{x \in \mathcal{X}}(1-t)\left(-c(x,y)-u_2(x)\right) \\ 
& =t u_1^c(y)+(1-t) u_2^c(y).
\end{aligned}\]

For any $u_1$, $u_2\in C(\mathcal{X})$, 
\[\begin{aligned} 
u_1^c(y)=\sup _x-c(x, y)-u_1(x) & \geq \sup _x-c(x, y)-u_2(x)-\left\|u_2-u_1\right\|_{C^0} \\ 
& =u_2^c(y)-\left\|u_2-u_1\right\|_{C^0},\end{aligned}\]
which means that 
$$\|u_1^c-u_2^c\|_{C^0}\leq \|u_2-u_1\|_{C^0}.$$
Further, we obtain the Lipschitz continuity of $\mathcal{J}$ on $C(\mathcal{X})$,
\[\left|\mathcal{J}\left(u_1\right)-\mathcal{J}\left(u_2\right)\right| \leq\|f\|_{L^1}\left\|u_1-u_2\right\|_{C^0}+\|g\|_{L^1}\left\|u_1^c-u_2^c\right\|_{C^0} \leq 2\left\|u_1-u_2\right\|_{C^0}.\]

Considering the strictly $c$-convex function $u\in C^{2}(\mathcal{X})$. Let $v\in C(\mathcal{X})$ and $\epsilon$ be a small enough constant. For fixed $y\in \mathcal{Y}$, we introduce the notation $x:=T_{u}^{-1}(y)$ and 
\[x_{\epsilon}\in \operatorname{argmax}_{x\in \mathcal{X}}\left\{-c(x,y)-(u+\epsilon v)(x)\right\}.\]
Using the property of the $c$-transform, we obtain
\[\begin{aligned}
        (u+\epsilon v)^{c}(y) - u^{c}(y) &= -c(x_{\epsilon},y)-(u+\epsilon v)(x_{\epsilon})-u^{c}(y)\\
        &\leq u(x_{\epsilon})- (u+\epsilon v)(x_{\epsilon})\\
        &= -\epsilon v(x_{\epsilon}),\\
        (u+\epsilon v)^{c}(y) - u^{c}(y) &= (u+\epsilon v)^{c}(y) + c(x_0,y) + u(x_0)\\
        &\geq -(u+\epsilon v)(x_0)+u(x_0) \\
        &= -\epsilon v(x_0).
        \end{aligned}\]
It follows that
\[\left|\frac{(u+\epsilon v)^{c}(y) - u^{c}(y)}{\epsilon} + v(x)\right|\leq\left|v(x_{\epsilon}) - v(x)\right|.\]
Every convergent subsequence of $\{x_{\epsilon}\}$ converges to the point $x$, thus $\{x_{\epsilon}\}\rightarrow x$ as $\epsilon \rightarrow 0$. Therefore, $v(x_{\epsilon})\rightarrow v(x)$ and 
\[\lim_{\epsilon\rightarrow 0}\frac{(u+\epsilon v)^{c}(y) - u^{c}(y)}{\epsilon}=-(v\circ T_{u}^{-1})(y).\]

Hence we have
\[
\begin{aligned}
\mathcal{J}^{\prime}(u)v=\lim _{\epsilon \rightarrow 0} \frac{\mathcal{J}(u+\epsilon v)-\mathcal{J}(u)}{\epsilon}&=\int_{\mathcal{X}} v(x) f(x)\mathrm{d}x+\int_{\mathcal{Y}} \lim _{\epsilon \rightarrow 0} \frac{(u+\epsilon v)^c(y)-v^c(y)}{\epsilon} g(y)\mathrm{d}y\\
&= \int_{\mathcal{X}}v(x)f(x)\mathrm{d}x -\int_{\mathcal{Y}}(v\circ T_{u}^{-1})(y)g(y)\mathrm{d}y \\
&= \int_{\mathcal{X}}v(x)\big(f-g(T_{u})|\operatorname{det}(\nabla T_{u})|\big)\mathrm{d}x.
\end{aligned}\]
Thus we obtain $\mathcal{J}^{\prime}(u)$ in (\ref{gra_org}). A a direct computation  \cite{villani2009optimal, loeper2011regularity} of (\ref{gra_org}) yields the formula (\ref{gra_now}). 

\end{proof}

Furthermore, the operator $\mathcal{J}^{\prime}(u)$ is differentiable on the space $C^{2}(\mathcal{X})$, which gives the second order information of $\mathcal{J}$.
\begin{corollary}\label{hessian}
The second order derivative of the functional $\mathcal{J}$ is given by the linearized operator $\mathcal{L}$ of $\mathcal{J}^{\prime}$ at $u$,
\begin{equation}\label{linear_op}
    \mathcal{L}_{u}v:= \sum_{i}\mathcal{L}_{u}^{i}\,D_{i}v + \sum_{ij} \mathcal{L}_{u}^{ij}\, D_{_{ij}}^{2}v, \quad v\in C^{2}(\mathcal{X}),
\end{equation}
\[\mathcal{L}_{u}^{i}:= \operatorname{det}(\omega)\sum_{kl} \left(D_{p_{i}}\tilde{g} + \tilde{g}\omega^{kl}D_{p_{i}}A_{kl}\right), \quad
      \mathcal{L}_{u}^{ij}:= \tilde{g}\operatorname{det}(\omega)\,\omega^{ij},\]
where $\omega:= D^{2}u+A(x,\nabla u)$ and $D_{p_{i}}A_{kl}$, $D_{p_{i}}\tilde{g}$ are evaluated at the point $(x,p) = \left(x,\nabla u\right)$. Here,
\begin{equation}\label{partial_derivatives}
    \begin{aligned}
    &D_{p_{i}}A_{kl}(x,p) = \sum_{j}c_{kl, j}c^{j,i},\\
    &D_{p_{i}}\tilde{g}(x,p) = \left|\operatorname{det}(D_{xy}^{2}c)\right|^{-1}\left(\sum_{j}c^{j,i} \left(D_{j}g\circ \mathcal{T}\right) + \left(g\circ \mathcal{T}\right)\sum_{jkl}c^{j, k}c_{k, jl}c^{l,i}\right).
    \end{aligned}
\end{equation}
where the cost function $c$ and its derivatives are evaluated at $(x,y) = \left(x,\mathcal{T}(x,p)\right)$.
\end{corollary}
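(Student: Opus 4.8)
The plan is to realize $\mathcal{L}_u$ as the Gateaux derivative of the first variation, $\mathcal{L}_u v = \frac{d}{d\epsilon}\big|_{\epsilon=0}\mathcal{J}'(u+\epsilon v)$, starting from the closed form $\mathcal{J}'(u)=f-\tilde g(x,\nabla u)\det\omega$ with $\omega=D^2u+A(x,\nabla u)$ furnished by Proposition~\ref{dual_grad}. Since $f$ does not depend on $u$, only the product $\tilde g(x,\nabla u)\det\omega$ needs to be differentiated. The standing hypothesis $c\in C^{4\times4}$ ensures that $A(\cdot,\cdot)$ and $\tilde g(\cdot,\cdot)$ are continuously differentiable in the momentum variable $p$, so that $u\mapsto\mathcal{J}'(u)$ is genuinely differentiable on $C^2(\mathcal{X})$ and all coefficients produced below are continuous; strict $c$-convexity guarantees $\omega>0$, hence $\omega^{-1}=(\omega^{ij})$ is well defined.

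First I would isolate the three channels through which the perturbation enters. The Hessian contributes $D^2(u+\epsilon v)=D^2u+\epsilon D^2v$ directly to $\omega$, while the gradient $\nabla(u+\epsilon v)=\nabla u+\epsilon\nabla v$ enters $\omega$ through $A(x,\cdot)$ and also enters the prefactor $\tilde g(x,\cdot)$. Differentiating $\omega$ in $\epsilon$ gives $\dot\omega_{kl}=D^2_{kl}v+\sum_i D_{p_i}A_{kl}\,D_iv$, and Jacobi's formula yields $\frac{d}{d\epsilon}\det\omega=\det(\omega)\sum_{kl}\omega^{kl}\dot\omega_{kl}$, while $\frac{d}{d\epsilon}\tilde g=\sum_i D_{p_i}\tilde g\,D_iv$. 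Applying the product rule and then sorting the outcome by the order of the derivative of $v$ — the coefficient of $D^2_{ij}v$ giving $\mathcal{L}_u^{ij}=\tilde g\det(\omega)\omega^{ij}$, and the coefficient of $D_iv$ giving $\mathcal{L}_u^i=\det(\omega)\big(D_{p_i}\tilde g+\tilde g\sum_{kl}\omega^{kl}D_{p_i}A_{kl}\big)$ — reproduces the operator $\mathcal{L}_u$ in (\ref{linear_op}).

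The substantive part is to supply the explicit formulas (\ref{partial_derivatives}) for $D_{p_i}A_{kl}$ and $D_{p_i}\tilde g$, which requires differentiating through the map $\mathcal{T}(x,p)=(\nabla_xc)^{-1}(x,-p)$ of (\ref{optimal_map}). I would obtain the derivative of $\mathcal{T}$ by implicitly differentiating its defining identity $\nabla_xc(x,\mathcal{T}(x,p))=-p$: differentiating in $p_j$ gives $\sum_k c_{i,k}\,D_{p_j}\mathcal{T}_k=-\delta_{ij}$, so $D_p\mathcal{T}=-(D^2_{xy}c)^{-1}$. Substituting this into $A_{kl}(x,p)=c_{kl}(x,\mathcal{T}(x,p))$ by the chain rule produces the first line of (\ref{partial_derivatives}). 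For $\tilde g=(g\circ\mathcal{T})\,\big|\det D^2_{xy}c(x,\mathcal{T})\big|^{-1}$ I would combine the quotient rule with a second application of Jacobi's formula, $D_{y_m}\det(D^2_{xy}c)=\det(D^2_{xy}c)\sum_{kl}c^{l,k}c_{k,lm}$, and then compose with $D_p\mathcal{T}$ to arrive at the second line of (\ref{partial_derivatives}).

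The step I expect to be the main obstacle is precisely this last computation for $\tilde g$: its denominator is the determinant of the mixed Hessian $D^2_{xy}c$ evaluated along $\mathcal{T}$, so its $p$-derivative chains Jacobi's formula with $D_p\mathcal{T}=-(D^2_{xy}c)^{-1}$ and demands careful tracking of two distinct index groups — the $x$- and $y$-slots of $c$ versus the indices of the inverse matrix — together with the transpose conventions for $c^{i,j}$. One must also note that under the twist and non-degeneracy hypotheses $\det D^2_{xy}c$ has locally constant sign, so the modulus in $|\det D^2_{xy}c|$ may be dropped before differentiating. With (\ref{partial_derivatives}) established, assembling $\mathcal{L}_u^i$ and $\mathcal{L}_u^{ij}$ into (\ref{linear_op}) is immediate.
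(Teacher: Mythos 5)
Your proposal is correct and is precisely the computation the paper leaves implicit: Corollary \ref{hessian} is stated without any written proof, as a direct linearization of the expression (\ref{gra_now}) from Proposition \ref{dual_grad}, and your route --- perturbing $\omega$ through both the Hessian and gradient channels, Jacobi's formula for $\frac{d}{d\epsilon}\det\omega$, and implicit differentiation of $\nabla_x c\left(x,\mathcal{T}(x,p)\right)=-p$ to obtain $D_p\mathcal{T}=-\left(D^2_{xy}c\right)^{-1}$ --- is exactly that intended computation. One bookkeeping caveat: under the paper's stated convention that $c^{i,j}$ is the $(i,j)^{th}$ entry of $\left(D^2_{xy}c\right)^{-1}$, your own chain rule in fact yields $D_{p_i}A_{kl}=-\sum_j c_{kl,j}c^{j,i}$ and a relative minus sign on the first term of $D_{p_i}\tilde{g}$ (the second term, coming from Jacobi's formula applied to the denominator, is genuinely positive), and linearizing $\mathcal{J}'=f-\tilde{g}\det\omega$ additionally negates all the displayed coefficients; so you should record these signs explicitly rather than assert verbatim agreement with (\ref{partial_derivatives}) --- the displayed formulas appear to carry sign slips relative to the stated conventions. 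These discrepancies are harmless downstream, since Proposition \ref{main} uses only $\left|\int_{\mathcal{X}}v\,\mathcal{L}_u v\right|$ together with magnitude bounds on $\mathcal{L}_u^{i}$ and $\operatorname{div}\mathcal{L}_u^{ij}$, and the sign of $\int \tilde{g}\det(\omega)\,\omega^{ij}D_iv\,D_jv\geq 0$ after integration by parts is consistent with the convexity of $\mathcal{J}$ either way.
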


For $c(x,y) = \frac{1}{2}|x-y|^2$ on $\mathbb{R}^d$, the equation (\ref{MAO}) is $g(x+\nabla u)\operatorname{det}(D^2u +I)=f(x)$, which can be solved by Newton's method using (\ref{linear_op}). However, for the far-field reflector problem $c(x,y) = -\log(1-x\cdot y)$ on $\mathbb{S}^2$, due to the high nonlinearity of the equation and the domain, $\mathcal{L}$ easily becomes nontrivial. The gradient method is preferred for this case, i.e., 
\[u^{k+1}=u^k - \tau \cdot \mathcal{J}^{\prime}(u^k).\] 
However, the step size $\tau$ is hard to choose to ensure the convergence of the method. Motivated by the Sobolev gradient \cite{Neuberger}, we can define a new gradient $\mathcal{J}_{s}^{\prime}$ by changing the underlying inner space from $L_2$ to $\dot{H}^1$ (the seminorm of $H^1$),
\[\left\langle\mathcal{J}^{\prime}, \eta \right\rangle_{L^2} = \left\langle\mathcal{J}_{s}^{\prime},\eta\right\rangle_{\dot{H}^{1}}:= \left\langle \nabla \mathcal{J}_{s}^{\prime},\nabla \eta\right\rangle_{L^2}.\]
More formally, we define 
\[\mathcal{J}_{s}^{\prime}:=(-\Delta)^{-1}\mathcal{J}^{\prime},\]
which yields the Sobolev gradient descent scheme of (\ref{reduced_dual}),
\begin{equation}\label{sobolev_des}
    \Delta u^{k+1} = \Delta u^{k} +\tau \cdot\mathcal{J}^{\prime}(u^{k}).
\end{equation}

In this work, we utilize the descent scheme (\ref{sobolev_des}) to develop the numerical algorithm for solving the far-field problem.

\section{Convergence of the Sobolev descent scheme}

This section is devoted to the convergence analysis of the Sobolev gradient descent sequence (\ref{sobolev_des}) for the optimization problem (\ref{reduced_dual}). 

We assume that the conditions stated at the beginning of Section 3 are satisfied. We begin with the following regularity theorem \cite{caffarelli1990interior, kim2012parabolic, loeper2009regularity, loeper2011regularity, ma2005regularity, villani2009optimal} of the generalized Monge-Ampère equation (\ref{MAO}).

\begin{theorem}\label{existence}
 Suppose that $f,g \in C^{1,1}(\mathcal{X})$ have positive lower bounds on $\mathcal{X}$. Then the generalized Monge-Ampère equation (\ref{MAO}) has a unique $c$-convex solution $u\in \tilde{C}^{3,\alpha}(\mathcal{X})$. In addition, $u$ is the unique $c$-convex minimizer of $\mathcal{J}(u)$ in (\ref{reduced_dual}).
\end{theorem}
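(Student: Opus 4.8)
The plan is to obtain the result in three stages: first construct a $c$-convex minimizer of $\mathcal{J}$ through the duality theory of optimal transport, then upgrade its regularity to $\tilde{C}^{3,\alpha}$ via the Ma--Trudinger--Wang theory, and finally transfer existence and uniqueness between the minimization problem and the PDE (\ref{MAO}) using the first-order optimality condition of Proposition \ref{dual_grad}.

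For the existence of a minimizer, I would first treat $f$ and $g$ as probability densities (their total masses agree by the measure-preserving constraint (\ref{mp})). Since $c\in C^{4\times 4}$ satisfies the twist and non-degeneracy conditions and $\mathcal{X}=\mathcal{Y}$ is a compact manifold, Kantorovich duality together with McCann's existence theorem \cite{mccann2001polar, villani2009optimal} yields a $c$-convex potential $u_{0}$ realizing the infimum of the reduced dual functional $\mathcal{J}$, with associated optimal map $T_{u_{0}}=\mathcal{T}(x,\nabla u_{0})$. Because the densities are strictly positive and $\mathcal{X}$ is connected, the optimal map, hence $\nabla u_{0}$, is unique; consequently $u_{0}$ is unique up to an additive constant, and I would fix that constant by the zero-mass normalization so that $u_{0}\in\tilde{C}(\mathcal{X})$.

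The heart of the argument is the regularity step. Here I would invoke the MTW condition assumed on $c$, which by Loeper's theory \cite{loeper2009regularity, loeper2011regularity, ma2005regularity} guarantees that a $c$-convex potential transporting a density bounded above and below to another such density is $C^{1,\alpha}$. With $\nabla u_{0}\in C^{0,\alpha}$ in hand, the map $T_{u_{0}}$ is H\"older continuous and the positive lower bounds on the densities force the matrix $\omega:=D^{2}u_{0}+A(x,\nabla u_{0})$ to be uniformly positive definite, so $u_{0}$ is strictly $c$-convex. Writing (\ref{MAO}) as $\det\omega=f/\tilde{g}(x,\nabla u_{0})$ with $C^{0,\alpha}$ right-hand side, Caffarelli-type interior estimates \cite{caffarelli1990interior} give $u_{0}\in C^{2,\alpha}$; then $\nabla u_{0}\in C^{1,\alpha}$ renders the right-hand side $C^{1,\alpha}$ (using $f,g\in C^{1,1}$ and $c\in C^{4}$), and Schauder estimates for the uniformly elliptic linearized operator $\mathcal{L}_{u_{0}}$ of Corollary \ref{hessian} bootstrap to $u_{0}\in C^{3,\alpha}$. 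On the closed manifolds $\mathbb{S}^{n}$ and $\mathbb{T}^{n}$ these estimates are global, so $u_{0}\in\tilde{C}^{3,\alpha}(\mathcal{X})$.

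Finally I would tie the two formulations together. Since $u_{0}$ is now a smooth, strictly $c$-convex minimizer, Proposition \ref{dual_grad} applies and gives $\mathcal{J}^{\prime}(u_{0})=0$, i.e. $u_{0}$ solves (\ref{MAO}). Conversely, any $c$-convex solution $u\in\tilde{C}^{3,\alpha}(\mathcal{X})$ of (\ref{MAO}) satisfies $\mathcal{J}^{\prime}(u)=0$; by the convexity of $\mathcal{J}$ established in Proposition \ref{dual_grad}, a critical point is a global minimizer, so $u$ coincides with $u_{0}$ up to a constant, and the zero-mass normalization forces $u=u_{0}$. This yields simultaneously the uniqueness of the $c$-convex solution in $\tilde{C}^{3,\alpha}$ and the fact that it is the unique $c$-convex minimizer of $\mathcal{J}$. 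The step I expect to be the main obstacle is the regularity bootstrap: verifying that the MTW condition together with the (implicit) $c$-convexity and connectedness geometry of the target delivers the initial $C^{1,\alpha}$ estimate and uniform ellipticity of $\omega$, since without a quantitative strict $c$-convexity bound the operator $\mathcal{L}_{u_{0}}$ could degenerate and the Caffarelli--Schauder machinery would not close. Everything downstream is then standard elliptic bootstrapping, and the equivalence with the minimization problem is a formal consequence of convexity and the first-order condition.
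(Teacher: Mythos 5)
The paper offers no proof of Theorem \ref{existence}: it is stated as a known regularity result and imported directly from the cited literature \cite{caffarelli1990interior, kim2012parabolic, loeper2009regularity, loeper2011regularity, ma2005regularity, villani2009optimal}, so there is no internal argument to compare yours against. That said, your three-stage sketch --- Kantorovich duality and \cite{mccann2001polar} for existence of a $c$-convex minimizer, with the optimal map (hence $\nabla u_0$, hence $u_0$ up to the zero-mass normalization) unique by the twist condition, positivity of the densities and connectedness; MTW regularity theory for the upgrade to $\tilde{C}^{3,\alpha}$; and convexity of $\mathcal{J}$ together with the first-order condition $\mathcal{J}^{\prime}(u)=0$ from Proposition \ref{dual_grad} to identify $c$-convex solutions of (\ref{MAO}) with minimizers --- is a faithful reconstruction of how those references establish the statement. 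Your closing equivalence even uses the same convexity inequality $\mathcal{J}(v)-\mathcal{J}(u)\geq\int_{\mathcal{X}}\mathcal{J}^{\prime}(u)(v-u)$ that the paper itself relies on in the convergence analysis following Proposition \ref{main}.

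Two citation-level imprecisions sit in the regularity step, which you correctly flagged as the crux. First, Caffarelli's interior $C^{2,\alpha}$ theory \cite{caffarelli1990interior} is specific to the standard Monge--Amp\`ere equation: it rests on the affine-invariant geometry of sections and does not accommodate the matrix $A(x,\nabla u)$ in $\omega=D^{2}u+A(x,\nabla u)$. For a general MTW cost the second-order estimate must instead come from the continuity-method a priori estimates of \cite{ma2005regularity}, the interior $C^{2,\alpha}$ theory for general costs (Liu--Trudinger--Wang), or the parabolic flow of \cite{kim2012parabolic} --- the last of which appears in the paper's citation list precisely because it supplies global estimates on the boundaryless compact manifolds $\mathbb{S}^{n}$ and $\mathbb{T}^{n}$ considered here, so no boundary convexity hypotheses are needed. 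Second, Loeper's initial $C^{1,\alpha}$ estimate \cite{loeper2009regularity} requires the \emph{strict} MTW condition, not merely the weak one assumed in Section 3; this is harmless for the reflector cost $c(x,y)=-\log(1-x\cdot y)$, where strict MTW holds and the graph of the optimal map stays away from the singular set \cite{loeper2011regularity}, but in the abstract weak-MTW setting one would instead run the continuity method within smooth positive densities and pass to the limit. Neither point breaks your argument; once the uniform positive-definiteness of $\omega$ and the $C^{2,\alpha}$ bound are in hand, your final Schauder bootstrap (differentiate the equation, use $f,g\in C^{1,1}$ and $c\in C^{4}$ to get $C^{0,\alpha}$ data for the uniformly elliptic $\mathcal{L}_{u_0}$, conclude $u_0\in C^{3,\alpha}$ for every $\alpha\in(0,1)$) is exactly the standard route, and matches the regularity class asserted in the theorem.
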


For this solution $u$, (\ref{optimal_cond2}) implies that $\omega = D^{2}u +A(x,\nabla u)$ is a positive semi-definite matrix. Considering the boundedness of $\|u\|_{C^2}$, there exists a constant $C>0$ such that 
\begin{equation}\label{b1}
    \|\omega\|_{2,\mathcal{X}}<C.
\end{equation}
Since $c\in C^4$ and $\operatorname{det}(D_{xy}^2 c)\neq 0$, we know that $\left|\operatorname{det} D_{x y}^2 c\right|$ has a positive lower bound on $\mathcal{X}\times \mathcal{Y}$. Therefore, the boundedness of $f,g$ in Theorem \ref{existence} indicates that
\begin{equation}\label{b2}
\operatorname{det}(\omega)=\frac{f(x)}{\tilde{g}(x, \nabla u(x))}>C, \quad \forall x \in \mathcal{X},
\end{equation}
for some $C>0$. The inequality (\ref{b1}) and (\ref{b2}) together imply that the eigenvalues of $\omega$ are bounded from above and below, i.e., there exists some $\lambda >0$ such that 
\begin{equation}\label{lambda_defn}
    \frac{2}{\lambda} < D^{2}u(x)+ A\left(x,\nabla u(x)\right)< \frac{1}{2}\lambda,\quad \forall x\in\mathcal{X}.
\end{equation}
Obviously, there exists a neighborhood $V_{\lambda}$ of $u$ with the radius $r$ 
\begin{equation}\label{V_lambda}
    V_{\lambda}:=\left\{v\in \tilde{C}^{2}(\mathcal{X}):\|u-v\|_{C^2(\mathcal{X})}\leq r(\lambda)\right\},
\end{equation}
such that a function $v$ in $V_{\lambda}$ is strictly $c$-convex with respect to $\lambda^{-1}$,
\[ \frac{1}{\lambda} < D^{2}v(x)+ A\left(x,\nabla v(x)\right)< \lambda, \quad \forall x\in \mathcal{X}.\]

If the descent sequence is contained in this neighborhood of the solution $u$, then we can prove the strictly decreasing of $\mathcal{J}$ and the local convergence of $\{u^k\}_k$. To prove that, we need the following result to bound the divergence of the cofactor matrix of $\omega$ by the parameter $\lambda$ in (\ref{lambda_defn}).

\begin{lemma}\label{prior_estimate}
Suppose that $u$ is an arbitrary strictly $c$-convex function. For the map $T_{u}$ defined in (\ref{optimal_map}), its Jacobi matrix has divergence-free rows, i.e.
\begin{equation}\label{divergence_free}
    \sum_{j=1}^{n}D_{j}\left((\operatorname{cof}\nabla T_{u})_{ij}^{ }\right)=0, \qquad i=1,\cdots,n.
\end{equation}
where $\operatorname{cof}\nabla T_{u}$ denotes the cofactor matrix of $\nabla T_{u}$. In addition, for any $x\in \mathcal{X}$, the matrix $\omega = D^{2}u(x) + A\left(x,\nabla u(x)\right)$ satisfies 
\begin{equation}\label{divergence_estimate}
    \quad \left|\sum_{j=1}^{n}D_{j}\left((\operatorname{cof}\omega)_{ij}^{}\right)\right|\leq C\left\|\operatorname{cof}\omega\right\|_{2,\mathcal{X}}\cdot\left(\left\|\omega\right\|_{2,\mathcal{X}}+1 \right),\qquad i =1,\cdots, n,
\end{equation}
where the constant $C$ depends only on the cost function $c$ and the dimension $n$.
\end{lemma}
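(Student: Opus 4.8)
The plan is to establish the divergence-free identity (\ref{divergence_free}) first, and then to reduce the estimate (\ref{divergence_estimate}) to it through an exact algebraic relation between $\nabla T_u$ and $\omega$. For (\ref{divergence_free}) I would invoke the classical Piola identity: for any $C^2$ map $F:\mathcal{X}\to\mathcal{Y}$, the cofactor matrix of its Jacobian has divergence-free rows. The regularity supplied by Theorem \ref{existence} gives $u\in\tilde{C}^{3,\alpha}$, hence $T_u=\mathcal{T}(x,\nabla u)\in C^{2}$, so the identity applies to $F=T_u$. The mechanism is transparent once the cofactor entries are written through the Levi-Civita symbol,
\[(\operatorname{cof}\nabla T_u)_{ij}=\frac{1}{(n-1)!}\,\varepsilon_{i i_2\cdots i_n}\varepsilon_{j j_2\cdots j_n}\,D_{j_2}(T_u)_{i_2}\cdots D_{j_n}(T_u)_{i_n}.\]
Differentiating in $x_j$ and summing, each resulting term carries a factor $D_{j}D_{j_k}(T_u)_{i_k}$ that is symmetric in the pair $(j,j_k)$, which is contracted against the antisymmetric $\varepsilon_{j j_2\cdots j_n}$, so the whole sum cancels.

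The bridge to (\ref{divergence_estimate}) is the identity $\nabla T_u=-(D_{xy}^2 c)^{-1}\omega$. To obtain it I would differentiate the defining relation $\nabla_x c(x,T_u(x))=-\nabla u(x)$ of $\mathcal{T}$ (cf.\ (\ref{optimal_cond1}), (\ref{optimal_map})) in $x_j$, which gives $\sum_k c_{i,k}\,D_j(T_u)_k=-\big((D^2u)_{ij}+c_{ij}\big)=-\omega_{ij}$; inverting $D_{xy}^2 c=(c_{i,k})$ yields $(\nabla T_u)_{kj}=-\sum_i c^{k,i}\omega_{ij}$. By the multiplicativity of the cofactor map, $\operatorname{cof}\omega=B\,\operatorname{cof}(\nabla T_u)$, where $B:=\operatorname{cof}(-D_{xy}^2 c)$ depends on $u$ only through the point $T_u(x)$ at which $c$ is evaluated; in particular $B$ is invertible with $B^{-1}=\operatorname{cof}\!\big(-(D_{xy}^2 c)^{-1}\big)$, so $\operatorname{cof}(\nabla T_u)=B^{-1}\operatorname{cof}\omega$.

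Combining these, I would expand $\sum_j D_j(\operatorname{cof}\omega)_{ij}=\sum_{j,k}\big[(D_jB_{ik})(\operatorname{cof}\nabla T_u)_{kj}+B_{ik}\,D_j(\operatorname{cof}\nabla T_u)_{kj}\big]$. The second sum vanishes by (\ref{divergence_free}), leaving $\sum_{j,k,l}(D_jB_{ik})(B^{-1})_{kl}(\operatorname{cof}\omega)_{lj}$. Since $B$ is assembled from $D_{xy}^2 c\in C^{4}$, its $x$-derivative is controlled by the third derivatives of $c$ together with the chain-rule factor $\nabla T_u=-(D_{xy}^2 c)^{-1}\omega$ coming from the moving $y$-argument, whence $|D_jB_{ik}|\le C\big(1+\|\omega\|_{2,\mathcal{X}}\big)$; meanwhile $B^{-1}$ is uniformly bounded because $\det(D_{xy}^2 c)$ is bounded away from zero on the compact product $\mathcal{X}\times\mathcal{Y}$. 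Bounding each $(\operatorname{cof}\omega)_{lj}$ by $\|\operatorname{cof}\omega\|_{2,\mathcal{X}}$ and absorbing the $n^3$ summands into the constant then delivers (\ref{divergence_estimate}).

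The main obstacle is the bookkeeping that produces \emph{linear} rather than quadratic growth in $\|\omega\|_{2,\mathcal{X}}$: this relies on $\nabla T_u$ being \emph{linear} in $\omega$, so that the single chain-rule derivative $D_jB$ contributes exactly one factor of $\omega$, and on the uniform invertibility of $D_{xy}^2 c$ over the compact product so that every remaining factor reduces to a constant depending only on $c$ and $n$. A secondary point requiring care is ensuring $T_u$ is $C^2$ so that the Piola identity holds classically, which is precisely what the $C^{3,\alpha}$ regularity of the solution guarantees.
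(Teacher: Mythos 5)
Your proposal is correct and follows essentially the same route as the paper: both rest on the Piola identity for $\operatorname{cof}\nabla T_u$, the linear relation $\nabla T_u=-\left(D_{xy}^2 c\right)^{-1}\omega$ obtained by differentiating $\nabla_x c(x,T_u(x))=-\nabla u(x)$, cofactor multiplicativity, and a product-rule expansion in which the Piola term vanishes while the chain-rule derivative of the cost-dependent cofactor factor contributes exactly one factor of $\omega$, giving the linear growth $\|\operatorname{cof}\omega\|_{2,\mathcal{X}}\left(\|\omega\|_{2,\mathcal{X}}+1\right)$. The only differences are cosmetic: the paper cites the Piola identity rather than proving it via the Levi-Civita expansion, and organizes the final step as a linear system $\mathbf{A}\mathbf{x}=\mathbf{b}$ solved with Jacobi's formula, whereas you invert the matrix $B$ directly---the same computation.
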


\begin{proof} We refer to \cite{Evans} for the proof of the identity (\ref{divergence_free}). Here, we prove the inequality (\ref{divergence_estimate}). It can be computed from (\ref{optimal_map}) that
\begin{equation}\label{grad_T}
    \nabla T_{u}(x) = -\left(D_{xy}^{2}c(x, T_{u}(x))\right)^{-1}\left(D^{2}u(x) + D_{xx}^{2}c\left(x,T_{u}(x)\right)\right).
\end{equation}
For the cofactor matrix of a matrix $P$, we recall the identity $(\operatorname{det}P)I = P^{\operatorname{T}}(\operatorname{cof}P)$. Therefore, we obtain from (\ref{grad_T}) 
\[\operatorname{cof}\nabla T_{u} = -\operatorname{cof}C\cdot \operatorname{cof}\omega,\]
where $\omega=D^{2}u(x) + A\left(x,\nabla u(x)\right)$ and $C:=D_{xy}^{2}c(x,T_{u}(x))$. Hence,
\[-D_{j}\left((\operatorname{cof}\nabla T_{u})_{ij}\right) = \sum_{k=1}^{n}\,(\operatorname{cof}\omega)_{kj}\cdot D_{j}\left((\operatorname{cof}C)_{ik}^{}\right)+ (\operatorname{cof}C)_{ik}\cdot D_{j}\left((\operatorname{cof}\omega)_{kj}^{}\right).\]
Summing the above formula over $j$, we have the equation
\[0=\sum_{k=0}^{n}(\operatorname{cof}C)_{ik}\left(\sum_{j=1}^{n}D_{j}\left((\operatorname{cof}\omega)_{kj}^{}\right)\right)+\sum_{k,j=1}^{n}(\operatorname{cof}\omega)_{kj}\cdot D_{j}\left((\operatorname{cof}C)_{ik}^{}\right),\]
for $i=1,2,\cdots,n$. It can be reformulated as a linear equation
\[\mathbf{A}\mathbf{x}=\textbf{b},\] 
\[\begin{aligned}
\textbf{x} &= (\text{x}_{1}, \cdots \text{x}_{n})\text{ with } \text{x}_{i} =\sum_{j=1}^{n}D_{j}\left((\operatorname{cof}\omega)_{ij}^{}\right), \qquad \textbf{A} = (C^{\operatorname{T}})^{-1},\\
\textbf{b} &= (\text{b}_{1}, \cdots \text{b}_{n})\text{ with } \text{b}_{i} = -\frac{1}{\operatorname{det} C}\sum_{k,j=1}^{n}(\operatorname{cof}\omega)_{kj}\cdot D_{j}\left((\operatorname{cof}C)_{ik}^{}\right).
\end{aligned}\]
Here, the term $D_{j}\left((\operatorname{cof}C)_{ik}^{}\right)$ in $\textbf{b}$ can be computed by the Jacobi's formula
\[D_{j}\left(\operatorname{cof}C\right) = (C^{\operatorname{T}})^{-1} \Big(\operatorname{tr}\left(\operatorname{cof}C^{\operatorname{T}} \cdot D_{j}C\right)- D_{j}C^{\operatorname{T}}\cdot\operatorname{cof} C\Big).\]
Through a simple computation, we have
\begin{equation}\label{b_i}
\begin{aligned}
    \text{b}_{i} = -\sum_{kj}(\operatorname{cof}\omega)_{kj}\sum_{lmrs}\,\omega_{sj}c^{r,s}&\big(c^{l,i}c^{k,m}c_{m,lr}-\\
    & c^{k,i}c^{m,l}c_{l,mr}\big)+ \left(c^{k,i}c^{m,l}c_{lj,m} - c^{l,i}c^{k,m}c_{mj,l}\right).
\end{aligned}
\end{equation}
The definitions of the notations have been given in (\ref{norm_defn}) and (\ref{deri_defn}). For the elements $\omega_{sj}$ and $(\operatorname{cof}\omega)_{kj}$ in (\ref{b_i}), 
\[\begin{aligned}
\sup_{x\in \mathcal{X}}\sup_{s,j}\left|\omega_{sj}\right|&\leq \left\|\omega\right\|_{2,\mathcal{X}},\\
\sup_{x\in \mathcal{X}}\sup_{k,j}\left|(\operatorname{cof}\omega)_{kj}\right|&\leq \left\|\operatorname{cof}\omega\right\|_{2,\mathcal{X}}.
\end{aligned}\]

Finally, we have the estimate
\[\begin{aligned}
\|\textbf{x}\|_{\infty,\mathcal{X}}\leq \left\|\mathbf{A}^{-1}\right\|_{\infty,\mathcal{X}}\,\|\mathbf{b}\|_{\infty, \mathcal{X}}\leq C\left\|\operatorname{cof}\omega\right\|_{2,\mathcal{X}}\left(\left\|\omega\right\|_{2,\mathcal{X}}+1 \right).
\end{aligned}\]
Here, the constant $C$ depends only on the dimension $n$, the cost function $c$ up to its third-order derivatives and its twist condition,
\[\|c\|_{C^{1\times 2}(\mathcal{X}\times \mathcal{Y})}\, , \quad\|c\|_{C^{2\times 1}(\mathcal{X}\times \mathcal{Y})}\,, \quad\left\|\left(D_{xy}c\right)^{-1}\right\|_{2, \mathcal{X}\times \mathcal{Y}}.\]
Thus, we complete the proof of the inequality (\ref{divergence_estimate}).
\end{proof}

Now we are prepared to evaluate the decreasing of $\mathcal{J}$ on the sequence $\{u^k\}_k$ in (\ref{sobolev_des}).

\begin{proposition}\label{main}
  Suppose that the same conditions as in Theorem \ref{existence} hold, and for sufficiently small $\tau>0$, the sequence $\{u^{k}\}_{k}$ computed by
  \begin{equation}\label{iter_grad}
      \Delta u^{k+1}=\Delta u^{k} +\tau \cdot \mathcal{J}^{\prime}(u^{k}),
  \end{equation}
  is uniformly contained in the neighborhood $V_{\lambda}$ defined by (\ref{V_lambda}). Then there exists $\tau_{max}>0$ such that when $\tau<\tau_{max}$, the functional $\mathcal{J}$ is strictly decreasing on $\{u^{k}\}_{k}$ and 
  \begin{equation}\label{finals}
      \mathcal{J}(u^{k}) - \mathcal{J}(u^{k+1})\geq \frac{1}{2\tau_{max}}\left\|\nabla u^{k}-\nabla u^{k+1}\right\|_{L^2}^{2}.
  \end{equation}
  Here, $\tau_{max}$ depends on the dimension $n$, the parameter $\lambda$ in (\ref{lambda_defn}), the cost function $c$ and the density function $g$.
\end{proposition}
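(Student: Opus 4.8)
The plan is to expand $\mathcal{J}$ to second order along the segment joining $u^k$ to $u^{k+1}$, use the scheme (\ref{iter_grad}) to identify the first-order term with a negative multiple of $\|\nabla w^k\|_{L^2}^2$ (writing $w^k := u^{k+1}-u^k$), bound the second-order remainder above by $C\|\nabla w^k\|_{L^2}^2$ with a constant $C$ uniform over the neighborhood $V_{\lambda}$ of (\ref{V_lambda}), and finally choose $\tau$ small enough that the first-order term dominates. Since $V_{\lambda}$ is a ball in the $C^2$-norm it is convex, so the entire segment $u^k + t w^k$, $t\in[0,1]$, lies in $V_{\lambda}$; consequently every such function is strictly $c$-convex and $\mathcal{J}$ is $C^2$ along it, with second variation given by the operator $\mathcal{L}$ of Corollary \ref{hessian}. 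Taylor's formula with integral remainder then gives
\[
\mathcal{J}(u^{k+1}) - \mathcal{J}(u^k) = \langle \mathcal{J}^{\prime}(u^k), w^k\rangle_{L^2} + \int_0^1 (1-t)\,\langle \mathcal{L}_{u^k + t w^k}w^k, w^k\rangle_{L^2}\,\mathrm{d}t .
\]
The scheme (\ref{iter_grad}) reads $\Delta w^k = \tau\,\mathcal{J}^{\prime}(u^k)$, so integrating by parts on the closed manifold $\mathcal{X}$ (no boundary terms),
\[
\langle \mathcal{J}^{\prime}(u^k), w^k\rangle_{L^2} = \tfrac{1}{\tau}\langle \Delta w^k, w^k\rangle_{L^2} = -\tfrac{1}{\tau}\|\nabla w^k\|_{L^2}^2 .
\]

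The heart of the argument is controlling $\langle \mathcal{L}_{\xi}w, w\rangle$ for $\xi\in V_{\lambda}$. Using $\mathcal{L}_{\xi}^{ij} = \tilde{g}\,\det(\omega)\,\omega^{ij} = \tilde{g}\,(\operatorname{cof}\omega)_{ij}$ (the last equality by symmetry of $\omega$) and integrating the second-order part by parts, one obtains a leading term $-\int \tilde{g}\,(\operatorname{cof}\omega)_{ij}\,D_i w\,D_j w$, together with lower-order terms of the form $\int(\cdots)\,D_i w\,w$ whose coefficients collect $\sum_j D_j\!\big(\tilde{g}\,(\operatorname{cof}\omega)_{ij}\big)$ and the first-order coefficients $\mathcal{L}_{\xi}^{i}$. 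The leading term is nonpositive because $\omega>0$ on $V_{\lambda}$ forces $\operatorname{cof}\omega$ to be positive definite and $\tilde g>0$, so it may be discarded for an upper bound. For the remaining terms, (\ref{b1}), (\ref{b2}) and (\ref{lambda_defn}) give uniform control of $\|\omega\|_{2,\mathcal{X}}$, $\|\operatorname{cof}\omega\|_{2,\mathcal{X}}$, $\tilde g$ and $\nabla\tilde g$, while the divergence $\sum_j D_j\big((\operatorname{cof}\omega)_{ij}\big)$ is bounded precisely by Lemma \ref{prior_estimate}. Applying Cauchy–Schwarz and the Poincaré inequality $\|w\|_{L^2}\le C_P\|\nabla w\|_{L^2}$ — valid since $w\in\tilde C^{2}(\mathcal{X})$ has zero mean — converts each lower-order term into a bound $C\|\nabla w\|_{L^2}^2$, so that $\langle \mathcal{L}_{\xi}w, w\rangle \le C\|\nabla w\|_{L^2}^2$ with $C=C(n,\lambda,c,g)$ uniform on $V_{\lambda}$.

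Inserting both estimates and using $\int_0^1(1-t)\,\mathrm{d}t=\tfrac12$ yields
\[
\mathcal{J}(u^{k+1}) - \mathcal{J}(u^k) \le \Big(-\tfrac{1}{\tau} + \tfrac{C}{2}\Big)\|\nabla w^k\|_{L^2}^2 .
\]
Setting $\tau_{max} := 1/C$, for every $\tau<\tau_{max}$ we have $\tfrac{1}{\tau} - \tfrac{C}{2} > \tfrac{C}{2} = \tfrac{1}{2\tau_{max}}$, which gives the strict decrease and the quantitative bound (\ref{finals}). The main obstacle is the second-order estimate: the quadratic form $\langle \mathcal{L}_{\xi}w, w\rangle$ pairs the second derivatives $D^2 w$ against the variable coefficient matrix $\operatorname{cof}\omega$, and in the highly nonlinear setting there is no monotonicity or convexity of $\mathcal{J}$ to exploit directly; only after integration by parts and the divergence bound of Lemma \ref{prior_estimate} can these second derivatives be traded for the controllable quantity $\|\nabla w\|_{L^2}^2$. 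The delicate bookkeeping is ensuring that all the constants arising from $\tilde g$, $\operatorname{cof}\omega$, $\omega^{-1}$ and their derivatives remain uniform over the neighborhood $V_{\lambda}$, which is exactly what the strict $c$-convexity bound (\ref{lambda_defn}) and Lemma \ref{prior_estimate} guarantee.
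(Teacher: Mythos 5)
Your proof is correct and takes essentially the same approach as the paper: the scheme identifies the first-order term with $-\tau^{-1}\|\nabla w^k\|_{L^2}^2$, and the central estimate $\left|\langle \mathcal{L}_{\xi}w,w\rangle_{L^2}\right|\le C(n,\lambda,c,g)\,\|\nabla w\|_{L^2}^2$ uniformly over $V_{\lambda}$ is obtained exactly as in the paper, via integration by parts, the cofactor-divergence bound of Lemma \ref{prior_estimate}, and the Poincar\'e--Wirtinger inequality for zero-mean functions. The only difference is cosmetic: you expand $\mathcal{J}$ by Taylor's formula with integral remainder at $u^k$, whereas the paper combines the convexity inequality $\mathcal{J}(u^{k})-\mathcal{J}(u^{k+1})\ge\int_{\mathcal{X}}\mathcal{J}^{\prime}(u^{k+1})(u^{k}-u^{k+1})$ with a mean-value representation $\mathcal{J}^{\prime}(u^{k+1})-\mathcal{J}^{\prime}(u^{k})=\mathcal{L}_{u}(u^{k+1}-u^{k})$ at an intermediate point of the segment; your integral-remainder form is, if anything, slightly more careful, since the pointwise mean-value intermediate point may depend on $x$.
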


\begin{proof}
From Proposition \ref{dual_grad}, the functional $\mathcal{J}(u)$ is convex and hence for the iteration (\ref{iter_grad}), we have
\begin{equation}\label{des_err}
\begin{aligned}
     &\mathcal{J}(u^{k})-\mathcal{J}(u^{k+1})\geq \int_{\mathcal{X}}\mathcal{J}^{\prime}(u^{k+1})(u^{k}- u^{k+1})\\
     &=\frac{1}{\tau}\int_{\mathcal{X}}(u^{k}- u^{k+1})(\Delta u^{k+1}-\Delta u^{k}) +\int_{\mathcal{X}}(u^{k}- u^{k+1})\left(\mathcal{J}^{\prime}(u^{k+1})-\mathcal{J}^{\prime}(u^{k})\right).
\end{aligned}
\end{equation}
In order to estimate (\ref{des_err}), we first consider 
\begin{equation}\label{full_term}
    \int_{\mathcal{X}}v\cdot\mathcal{L}_{u}v\,\mathrm{d}x = \sum_{i}\int_{\mathcal{X}}v\,\mathcal{L}_{u}^{i}\,D_{i}v+\sum_{ij}\int_{\mathcal{X}}v\,\mathcal{L}_{u}^{ij}\,D_{ij}^2v,
\end{equation}
for any $u\in V_{\lambda}$ and $v\in \tilde{C}^{2}(\mathcal{X})$, where $\mathcal{L}_{u}$ is the linear operator given in Corollary \ref{hessian}. Since $\mathcal{X}$ is a closed manifold without boundary, applying the divergence theorem to the second term in (\ref{full_term}), we obtain
\begin{equation}\label{second_term}
    \begin{aligned}
    \sum_{ij}\int_{\mathcal{X}}v\,\mathcal{L}_{u}^{ij}\,D_{ij}^2v = \sum_{ij}\int_{\mathcal{X}}D_{j}v\,\mathcal{L}_{u}^{ij}\,D_{i}v +\sum_{i}\int_{\mathcal{X}}v \left(\operatorname{div}\mathcal{L}_{u}^{ij}\right) D_{i}v,
\end{aligned}
\end{equation}
where 
\begin{equation}\label{divergence}
    \operatorname{div}\mathcal{L}_{u}^{ij} := \sum_{j}D_{j}(\mathcal{L}_{u}^{ij})=\sum_{j}\tilde{g}(x,\nabla u)D_{j}\left((\operatorname{cof}\omega)_{ij}\right) + \left(\operatorname{cof}\omega\right)_{ij} D_{j}\left(\tilde{g}\left(x,\nabla u\right)\right)
\end{equation}
and 
\[D_{j}\left(\tilde{g}\left(x,\nabla u\right)\right) = \sum_{klrs}D_{k}g\, c^{k,l}\omega_{lj}-g\,c^{s,r}c_{rj,s} + g\,c^{s,r}c_{r,sk}c^{k,l}\omega_{lj}.\]

Using (\ref{divergence_estimate}) in Lemma \ref{prior_estimate}, the divergence (\ref{divergence}) can be bounded by
\[|\operatorname{div}\mathcal{L}_{u}^{ij}| \leq C\|g\|_{C^1}\left\|\operatorname{cof}\omega\right\|_{2,\mathcal{X}}\left(\left\|\omega\right\|_{2,\mathcal{X}}+1 \right),\]
where the constant $C$ depends only on the dimension $n$ and the cost function $c$ as in Lemma \ref{prior_estimate}. 

Similarly, we have $\left|\mathcal{L}_{u}^{i}\right|\leq C \|g\|_{C^1}$ from (\ref{partial_derivatives}). Since $u\in V_{\lambda}$, we obtain $\left\|\omega\right\|_{2,\mathcal{X}}\leq \lambda$ and $\left\|\operatorname{cof}\omega\right\|_{2,\mathcal{X}}\leq \lambda^{n-1}$. Therefore, there exists an upper bound $C$ such that
\begin{equation}\label{bound_1}
    \left|\mathcal{L}_{u}^{i}+\operatorname{div}\mathcal{L}_{u}^{ij}\right|\leq C(n,\lambda,c,g),
\end{equation}
for any $x\in \mathcal{X}$ and $i,j=1\cdots,n$. The constant $C$ depends on the dimension $n$, the parameter $\lambda$, the cost function $c$and the density function $g$.

Hence, combing with (\ref{second_term}), we can estimate (\ref{full_term}) with
\begin{equation}\label{L_estimate}
\begin{aligned}
    \left|\int_{\mathcal{X}}v\cdot\mathcal{L}_{u}v\,\mathrm{d}x\right| &= \left|\sum_{i}\int_{\mathcal{X}}v\left(\mathcal{L}_{u}^{i}+\operatorname{div}\mathcal{L}_{u}^{ij}\right)D_{i}v + \sum_{ij}\int_{\mathcal{X}}D_{i}v\,\mathcal{L}_{u}^{ij}\, D_{j}v\right|\\
    &\leq C\sum_{i}\left(\int_{\mathcal{X}}\left|v\right|\cdot\left|D_{i}v\right|+  \int_{\mathcal{X}}\left|D_{i}v\right|^{2}\right)\\
    & \leq C\left(\left\|v\right\|_{L^{2}}\sum_{i}\left(\int_{\mathcal{X}}|D_{i}v|^{2}\right)^{\frac{1}{2}} + \left\|\nabla v\right\|_{L^{2}}^{2}\right)\\
    &\leq C\left(\left\|v\right\|_{L^{2}}\left\|\nabla v\right\|_{L^{2}} + \left\|\nabla v\right\|_{L^{2}}^{2}\right)\\
    &\leq C\left\|\nabla v\right\|_{L^{2}}^{2},
    \end{aligned}
\end{equation}
where the constant $C$ depends on $n,\lambda,c,g$ as in (\ref{bound_1}). Here, the first inequality in (\ref{L_estimate}) follows from (\ref{bound_1}) and the positive definiteness of the matrix $\mathcal{L}_{u}^{ij}$, which is bounded from above since $u\in V_{\lambda}$. The last inequality in (\ref{L_estimate}) is based on the Poincaré–Wirtinger inequality for $v\in \tilde{C}^{2}(\mathcal{X})$.

For $u^{k},u^{k+1}\in V_{\lambda}$, there exists $s\in (0,1)$ such that
\begin{equation}\label{equal_mean}
    \mathcal{J}^{\prime}(u^{k+1})- \mathcal{J}^{\prime}(u^{k}) = \mathcal{L}_{u}v,
\end{equation}
where $v = u^{k+1}-u^k \in \tilde{C}^{2}$ and $u= su^{k} + (1-s)u^{k+1} \in V_{\lambda}$. 

We can substitute (\ref{equal_mean}) into (\ref{L_estimate}) and obtain
\begin{equation}\label{estimate2}
    \left|\int_{\mathcal{X}}(u^{k+1}-u^{k})\cdot \mathcal{L}_{u}(u^{k+1}-u^{k})\right|\leq C\|\nabla u^{k+1}-\nabla u^{k}\|_{L^2}^{2}.
\end{equation}
By applying the the divergence theorem and the inequality (\ref{estimate2}) to (\ref{des_err}),
\begin{equation}\label{estimate3}
    \mathcal{J}(u^{k})-\mathcal{J}(u^{k+1})\geq \left(\frac{1}{\tau}-C\right)\left\|\nabla u^{k} -  \nabla u^{k+1}\right\|_{L^2}^{2}.
\end{equation}
Choosing the step size $\tau<\tau_{max}:=\frac{1}{2}C^{-1}$ in (\ref{estimate3}), we finish the proof of (\ref{finals}). 
\end{proof}

Next, we study the limit of the sequence $\{u^{k}\}_k\subset V_{\lambda}$ in proposition \ref{main}. Since  the functional $\mathcal{J}$ is bounded from below, by summing the inequality (\ref{finals}), the series
\[\sum_{k=0}^{\infty}\left\|\nabla u^{k} - \nabla u^{k+1}\right\|_{L^2}^{2}<\infty,\]
is convergent and hence $\|\nabla u^{k}-\nabla u^{k+1}\|_{L^2}\rightarrow 0$ as $k\rightarrow \infty$.
For any $c$-convex function $v$, by the convexity of $\mathcal{J}$ and the divergence theorem, 
\[\begin{aligned}
\mathcal{J}(v)-\mathcal{J}(u^{k}) &\geq \int_{\mathcal{X}}\mathcal{J}^{\prime}(u^{k})(v-u^{k})=\frac{1}{\tau}\int_{\mathcal{X}}(\Delta u^{k+1}-\Delta u^{k})(v-u^{k}) \\
&\geq -\frac{1}{\tau}\|\nabla u^{k}-\nabla u^{k+1}\|_{L^2}\|\nabla v-\nabla u^{k}\|_{L^2}.
\end{aligned}\]

By passing the limit $k\rightarrow \infty$, we have $\mathcal{J}(v)\geq \lim_{k\rightarrow\infty} \mathcal{J}(u^{k})$. 
Since $\{u^{k}\}_k$ is uniformly bounded in $V_\lambda$, by the Arzela-Ascoli theorem, there exists a subsequence of $\{u^{k}\}_k$ converges uniformly to some $c$-convex function $\tilde{u}$ under $C^{1,\alpha}$ norm. Hence,
\[\inf_{c\text{-convex}}\mathcal{J}(v)=\lim_{k\rightarrow\infty} \mathcal{J}(u^{k})= \mathcal{J}(\tilde{u}),\]
As shown in Theorem \ref{existence}, the functional $\mathcal{J}$ has a unique $c$-convex minimizer, which means that $\tilde{u}$ is exactly the unique solution $u$ in Theorem \ref{existence}. Since any convergent subsequence of $\{u^{k}\}_k$ converges uniformly to $u$, we obtain the following convergence result.
\begin{theorem}\label{conver}
    Under the same assumptions of Proposition \ref{main}, the sequence $\{u^{k}\}_k$ converges uniformly to $u$ in $\tilde{C}^{1,\alpha}$, $\forall \alpha\in(0,1)$, where $u$ is the solution of the generalized Monge-Ampère equation (\ref{MAO}).
\end{theorem}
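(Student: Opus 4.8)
The plan is to combine the monotone descent estimate of Proposition~\ref{main} with the convexity of $\mathcal{J}$ and the uniqueness of the $c$-convex minimizer from Theorem~\ref{existence}, upgrading optimality of the limiting \emph{value} into convergence of the \emph{iterates} by a compactness argument. First I would extract summability from the descent estimate: since $\mathcal{J}$ is bounded below by Proposition~\ref{dual_grad} and $\{\mathcal{J}(u^{k})\}_k$ is nonincreasing, telescoping (\ref{finals}) over $k$ gives $\sum_{k}\|\nabla u^{k}-\nabla u^{k+1}\|_{L^2}^{2}<\infty$, so in particular $\|\nabla u^{k}-\nabla u^{k+1}\|_{L^2}\to 0$.

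Next I would show that $\lim_{k}\mathcal{J}(u^{k})=\inf_{c\text{-convex}}\mathcal{J}$. For an arbitrary $c$-convex competitor $v$, convexity yields $\mathcal{J}(v)-\mathcal{J}(u^{k})\geq\int_{\mathcal{X}}\mathcal{J}^{\prime}(u^{k})(v-u^{k})$. Using the scheme (\ref{iter_grad}) to replace $\mathcal{J}^{\prime}(u^{k})$ by $\tau^{-1}(\Delta u^{k+1}-\Delta u^{k})$ and integrating by parts on the closed manifold $\mathcal{X}$, the right-hand side becomes $-\tau^{-1}\int_{\mathcal{X}}\nabla(u^{k+1}-u^{k})\cdot\nabla(v-u^{k})$, which by Cauchy-Schwarz is bounded below by $-\tau^{-1}\|\nabla u^{k+1}-\nabla u^{k}\|_{L^2}\|\nabla v-\nabla u^{k}\|_{L^2}$. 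Since $\{u^{k}\}\subset V_{\lambda}$ keeps $\|\nabla v-\nabla u^{k}\|_{L^2}$ uniformly bounded while the first factor vanishes, letting $k\to\infty$ gives $\mathcal{J}(v)\geq\lim_{k}\mathcal{J}(u^{k})$ for every $c$-convex $v$, i.e. the limiting value is the global infimum.

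Then I would convert this into convergence of the iterates by compactness and uniqueness. Because $\{u^{k}\}$ is uniformly contained in $V_{\lambda}$, it is uniformly bounded in $C^2$, so $\{\nabla u^{k}\}$ is uniformly bounded and equi-Lipschitz; the Arzela-Ascoli theorem together with the uniform $C^2$ bound yields, along any subsequence, a further subsequence converging in $\tilde{C}^{1,\alpha}$ for every $\alpha\in(0,1)$ to some limit $\tilde{u}$. The limit is $c$-convex because the $c$-transform is a $C^0$-contraction (established in the proof of Proposition~\ref{dual_grad}), so uniform convergence preserves the identity $\tilde{u}=\tilde{u}^{cc}$, and the zero-mean constraint passes to the limit; hence $\tilde{u}\in\tilde{C}^{1,\alpha}$ is $c$-convex. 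By the $C^0$-Lipschitz continuity of $\mathcal{J}$ we get $\mathcal{J}(\tilde{u})=\lim_{k}\mathcal{J}(u^{k})=\inf_{c\text{-convex}}\mathcal{J}$, so $\tilde{u}$ minimizes $\mathcal{J}$; Theorem~\ref{existence} forces $\tilde{u}=u$, the unique solution of (\ref{MAO}). Since every subsequence of $\{u^{k}\}$ admits a further subsequence converging to the same limit $u$, the standard subsequence principle gives $u^{k}\to u$ in $\tilde{C}^{1,\alpha}$.

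The step I expect to be the main obstacle is the middle one, namely promoting the local, stepwise descent into global optimality of $\lim_{k}\mathcal{J}(u^{k})$. This is where the convexity of $\mathcal{J}$ and the $\dot{H}^{1}$ (Sobolev-gradient) structure of the scheme are both essential: rewriting $\mathcal{J}^{\prime}(u^{k})$ through the Laplacian and integrating by parts is precisely what lets the vanishing increment $\|\nabla u^{k+1}-\nabla u^{k}\|_{L^2}\to 0$ control the gap $\mathcal{J}(v)-\lim_k\mathcal{J}(u^{k})$ uniformly over all competitors $v$. The remaining ingredients---$C^{1,\alpha}$ precompactness, $c$-convexity of the limit, and identification with the unique minimizer---are routine once the a priori $C^2$ bound supplied by the standing hypothesis $\{u^{k}\}\subset V_{\lambda}$ is in hand.
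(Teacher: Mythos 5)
Your proposal is correct and follows essentially the same route as the paper's own proof: telescoping the descent estimate (\ref{finals}) to get $\|\nabla u^{k}-\nabla u^{k+1}\|_{L^2}\to 0$, combining convexity of $\mathcal{J}$ with the scheme (\ref{iter_grad}) and integration by parts to show $\mathcal{J}(v)\geq\lim_k\mathcal{J}(u^k)$ for every $c$-convex $v$, then Arzela--Ascoli plus the uniqueness of the $c$-convex minimizer from Theorem \ref{existence} and the subsequence principle to conclude $C^{1,\alpha}$ convergence of the full sequence. You in fact supply two details the paper leaves implicit---that $c$-convexity of the limit follows from the $C^0$-contraction property of the $c$-transform, and that $\mathcal{J}(\tilde{u})=\lim_k\mathcal{J}(u^k)$ uses the $C^0$-Lipschitz continuity of $\mathcal{J}$---so nothing further is needed.
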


It should be noted that Proposition \ref{main} utilizes the smoothness of the cost function $c$ on $\mathcal{X}\times \mathcal{Y}$ to get an upper bound for the step size $\tau$. However, Proposition \ref{main} still holds if the conditions on $c$ are further relaxed. Notice that all the estimates about $c$ in this section can be restricted to the set
\[\operatorname{graph}(T_{u}):=\left\{(x,y):x\in \mathcal{X},\, y= T_{u}(x)\right\},\]
for $c$-convex function $u\in V_{\lambda}$. Therefore, the domain of $c$ can be reduced from the original $\mathcal{X}\times \mathcal{Y}$ to 
\[\operatorname{G}_{\lambda}:=\underset{u\in V_{\lambda}}{\bigcup} \operatorname{graph}\left(T_{u}\right),\]
i.e., the cost function $c$ only needs to be smooth on $\operatorname{G}_{\lambda}$.

For the far-field reflector problem, the cost function $c(x,y)= -\log(1-x\cdot y)$ on $\mathbb{S}^{2}$ fails to be smooth on the singularity set 
\[\operatorname{sing}(c):=\left\{(x,y)\in \mathbb{S}^{2}\times \mathbb{S}^{2}: x=-y\right\}.\]
For this cost function, \cite{loeper2011regularity} demonstrates that there exists some constant $\delta>0$ such that
\begin{equation}\label{sing}
    d\left(\operatorname{sing}(c),\operatorname{G}_{\lambda}\right)>\delta,
\end{equation}
where $d$ is the metric on $\mathbb{S}^{2}$ and $\delta$ depends only on the parameter $\lambda$. The formula (\ref{sing}) means that the reflector cost $c(x,y)= \log(1-x\cdot y)$ is smooth on $\operatorname{G}_{\lambda}$. Hence, both Proposition \ref{main} and Theorem \ref{conver} can be extended to the far-field reflector problem.

\begin{remark}
The results in Section 3 and Section 4 are discussed for the entire space $\mathbb{S}^{n}$ or $\mathbb{T}^{n}$. For the far-field reflector problem, the density functions $f,g$ are supported on $\Omega\subset \mathbb{S}^{-}$ and $\Omega^*\subset \mathbb{S}^{-}$, respectively. To reduce the computation, we may solve (\ref{sobolev_des}) on $\Omega$ instead. An OT boundary condition \cite{benamou2014numerical}
\begin{equation}\label{bd}
    T_{u^{k+1}}(\partial \Omega) = \partial \Omega^{*},
\end{equation}
should be applied to the descent iteration (\ref{sobolev_des}). The $c$-convexity of $u^{k+1}$ in terms of the space $\Omega\times\Omega^{*}$ can be ensured by the condition (\ref{c_convex}) together with (\ref{bd}), which leads to a diffeomorphism $T_{u^{k+1}}$ from $\Omega$ to $\Omega^*$. It should be noted that the $c$-convexity of $u^{k+1}$ is important for the descent scheme (\ref{sobolev_des}), since $\mathcal{J}^{\prime}(u^{k+1})$ can not give a descent direction for $\mathcal{J}$ if $u^{k+1}$ is not $c$-convex.
\end{remark}

\section{Numerical method for the freeform reflector design}
In this section, based on the iteration (\ref{sobolev_des}), we employ the mixed finite element scheme in \cite{mcrae2018optimal} to solve the far-field reflector problem. The numerical experiments are performed using the open-source finite element framework Firedrake.

We use the notation $\mathcal{M}_{h}$ to denote the triangular partition of the computational domain, $V_h$ to denote the Lagrange finite element space consisting of continuous piecewise polynomials on $\mathcal{M}_{h}$, and $\Sigma_{h}:=(V_{h})_{3\times 3}$. For simplicity, $\left\langle\cdot,\cdot\right\rangle_{\Omega}$ is used to denote the $L^2$ inner product on $\Omega$.

The far-field design is equivalent to the optimal transport problem with $\mathcal{X}=\mathcal{Y}=\mathbb{S}^{2}$ and $c(x,y) = -\log(1-x\cdot y)$. The iterative scheme (\ref{sobolev_des}) for solving OT is a Poisson equation with respect to $u^{k+1}$, whose weak formulation is written as
\begin{equation}\label{w_S}
\left\langle\nabla v, \nabla u^{k+1}\right\rangle=\left\langle\nabla v, \nabla u^k\right\rangle+\tau\left\langle v, r^k\right\rangle, \quad \forall v \in V_h.
\end{equation}
Here, $r^{k}$ is the residual of the generalized Monge-Ampère equation,
\begin{equation}\label{res}
    r^{k}(x):= g\big(T_{u^k}(x)\big) \left|\operatorname{det}\big(\nabla  T_{u^k}(x)\big)\right|-f^k(x),
\end{equation}
where $T_{u}$ is the map defined by (\ref{optimal_map}) and $f^{k}(x):=\theta^{k}f(x)$,
\[\theta^{k}:= \left(\int_{\mathbb{S}^2}f\right)^{-1}\cdot\left(\int_{\mathbb{S}^2}g\big(T_{u^k}\big) \left|\operatorname{det}\big(\nabla  T_{u^k}\big)\right|\right).\]

For the far-field reflector problem, $T_{u}$ is exactly the optical map in (\ref{reflector_map3}). However, when applying the finite element method, the computational domain $\mathcal{M}_{h}$ is embedded in $\mathbb{R}^{3}$. The Jacobian matrix computed here is actually $\nabla_{\mathbb{R}^{3}}T_{u}$ instead $\nabla_{\mathbb{S}^{2}}T_{u}$ that we need. Hence, we recompute the Jacobian determinant in (\ref{res}) using the basic definition
\begin{equation}\label{JO}
    \left|\operatorname{det}\big(\nabla_{\mathbb{S}^{2}} T_{u}(x)\big)\right| = \lim_{|U|\rightarrow 0}\frac{|T_{u}(U_{x})|}{|U_{x}|},\quad U_{x} \text{ is the neighbourhood of } x.
\end{equation}

The computation of the Jacobian determinant follows the strategies of \cite{mcrae2018optimal}. For $x\in\mathbb{S}^{2}$, we parameterize its tangent plane using orthogonal vectors $\{e_1, e_2\}$. The corresponding surface area element $U$ equals to $|\operatorname{det}(e_1,e_2,x)|$. Accordingly, the surface area element of $|T_{u}(U)|$ is $|\operatorname{det}(v_1, v_2,y)|$, where $v_i = \nabla_{\mathbb{R}^3} T_{u}(x)\,e_i$, $i=1,2$ and $y = T_{u}(x)$. Constructing the projection matrix,
\begin{equation}
\left\{\begin{aligned}
    P(x)&:=I-xx^{\operatorname{T}},\\
    Q_{u}(x)&:=T_{u}(x)\,x^{\operatorname{T}},
\end{aligned}\right.
\end{equation}
we obtain
\[(v_1, v_2,y)= \Big(\nabla_{\mathbb{R}^3} T_{u}(x)P(x) + Q_u(x)\Big)(e_1,e_2,x).\]
The Jacobian determinant of $T_{u}:\mathbb{S}^{2}\rightarrow \mathbb{S}^{2}$ for $c$-convex $u$ can be given by
\begin{equation}
    \left|\operatorname{det}\big(\nabla_{\mathbb{S}^{2}} T_{u}(x)\big)\right| = \operatorname{det}\Big(\nabla_{\mathbb{R}^3} T_{u}(x)\,(I-xx^{\operatorname{T}})+ T_{u}(x)\,x^{\operatorname{T}}\Big),
\end{equation}
which makes it easy to compute $r^k$ in (\ref{res}) under the finite element framework.

In practice, we choose the computational domain as the entire sphere if intensity distributions $f,g$ are supported on complicated subsets of $\mathbb{S}^{2}$. On the other hand, if the structures of  $\Omega:=\operatorname{supp}f\subset\mathbb{S}_{-}^{2}$ and $\Omega^{*}:=\operatorname{supp}g\subset \mathbb{S}_{+}^{2}$ are regular, the computational domain can be chosen as $\Omega$ to improve the efficiency and accuracy.

In this case, the boundary condition (\ref{bd}) should be applied to Possion's equation (\ref{sobolev_des}). Here we use the Neumann boundary condition to realize (\ref{bd}) instead, which means that a priori information of $\nabla u^{k+1}\cdot \nu$ is needed, where $\nu$ is the normal of $\partial \Omega$.

Suppose that we already obtain $u^k$ at the $k$-th step. Then $u^k$ defines the corresponding map $T_{u^{k}}$, which is expected to satisfy the boundary condition $T_{u^{k}}(\partial \Omega)=\partial \Omega^{*}$. To achieve this, we project each point $y \in T_{u^{k}}(\partial \Omega)$ onto the closest point $p$ on $\partial \Omega^*$, 
\begin{equation}\label{proj}
    p^{k}(x):=\operatorname{Proj}_{\partial \Omega^{*}}\left(T_{u^{k}}^{}(x)\right),\quad x\in \partial \Omega,
\end{equation}
where
\[\operatorname{Proj}_{\partial \Omega^{*}}(y):= \operatorname{exp}_{y}\left(H(y)\nabla H(y)\right),\quad y\in\mathbb{S}^{2}.\]
Here, the symbol $\exp$ denotes the exponential map on the tangent bundle of $\mathbb{S}^{2}$, and $H(y)$ is the distance function of $\partial \Omega^*$,
\begin{equation}\label{dist}
H(y):= \begin{cases}-\operatorname{dist}(y, \partial \Omega^*), & y \in \Omega^*, \\ +\operatorname{dist}(y, \partial \Omega^*), & y \in \mathbb{S}^{2} /\Omega^*.\end{cases}
\end{equation}
We can update the information of $\nabla u^{k}$ from $p^{k}(x)$ in (\ref{proj}) and denote it as
\begin{equation}\label{Neu}
h^{k+1}(x):= \mathcal{T}^{-1}\left(x,\, p^{k}(x)\right)=-\nabla_{x} c\left(x, p^{k}(x)\right),\quad x\in \partial\Omega.
\end{equation}
For the far-field reflector problem $c(x,y)=-\log(1-x\cdot y)$, it is equal to
\[h^{k+1}(x)=\frac{p^k(x)-\left(x\cdot p^k(x)\right)x}{1-x\cdot p^k(x)},\quad x\in \partial\Omega.\]
The vector function $h^{k+1}$ can be viewed as the information of $\nabla u^{k+1}$ on $\partial \Omega$.

In order to compute the Jacobian determinant precisely, $\nabla_{\mathbb{R}^3} T_{u^k}$ is represented by a tensor-valued variable $\sigma^k\in\Sigma_{h}$, which solves the linear variational formulation,
\begin{equation}\label{mixed}
    \left\langle\sigma^k, \chi\right\rangle_{\Omega}=-\left\langle\operatorname{div} \chi, T_{u^k}\right\rangle_{\Omega} +\left\langle T_{u^k}, \chi \nu\right\rangle_{\partial \Omega}, \quad \forall \chi \in \Sigma_h.
\end{equation}
Therefore, the weak form of (\ref{sobolev_des}) on $\Omega$ can be formulated as
\begin{equation}\label{weak_p}
    \left\langle \nabla u^{k+1}, \,\nabla v\right\rangle_{\Omega} = \left\langle \nabla u^{k}, \,\nabla v\right\rangle_{\Omega} + \tau\left\langle r^{k},\,v\right\rangle_{\Omega}+ \left\langle\nu,v(h^{k} - h^{k+1})\right\rangle_{\partial \Omega},\quad v\in V_{h}.
\end{equation}
Here, $r^{k}$ defined by (\ref{res}) is rewritten as 
\[\begin{aligned}
r^{k}(x) &= g\big(T_{u^k}(x)\big)\operatorname{det}\Big(\sigma^{k}(x)(I-xx^{\operatorname{T}})+ T_{u^k}(x)\,x^{\operatorname{T}}\Big)-f^k(x),\\
f^{k}(x) &:= \theta^{k}\cdot f(x),
\end{aligned}\]
where 
\[\theta^{k}:=\left(\int_{\Omega}f\right)^{-1}\cdot\left(\int_{\Omega}g(T_{u^k})\operatorname{det}\big(\sigma^{k}(I-xx^{\operatorname{T}})+ T_{u^k}\,x^{\operatorname{T}}\big)\right).\]

The iteration should be terminated when the functional $\mathcal{J}(u^k)$ stops decreasing. However, it is complicated to compute $\mathcal{J}(u^{k})$ due to the computation of $u^{c}(y)$. Instead, we use the $L^2$ norm of $r^{k}$,
\[\|r^{k}\|_2:=\left(\int_{\Omega}|r^k(x)|^2\,\mathrm{d}x\right)^{2},\]
as the stop criterion. The numerical method for solving the far-field reflector problem on $\Omega$ is summarized in Algorithm \ref{alg1}.

\begin{algorithm}
    \caption{FEM for the far-field reflector design on $\Omega$.}\label{FF}
    \label{alg1}  
    \begin{algorithmic}  
    \STATE Given the initial value $u^{0}$ and step size $\tau$.
    \WHILE{$\|r^{k-1}\|_{2}< \|r^{k-2}\|_{2}$}
    \STATE Solving the linear variational problem (\ref{mixed}) to obtain $\sigma^k$.
    \STATE Evaluating the composite function $g(T_{u^k})$.
    \STATE Solving the weak formulation of Poisson’s equation (\ref{weak_p}) to obtain $u^{k+1}$.
    \STATE Computing the error $\|r^k\|_{2}$. 
    \STATE $k:= k+1$.
    \ENDWHILE
    \end{algorithmic}  
\end{algorithm}

\section{Numerical experiments}
In this section, we present the numerical experiments for the far-field reflector problem using the algorithm in Section 5. We define the subdomain $\mathbb{S}_{-\theta}^{2}\subset \mathbb{S}^{-}$ as
\[\mathbb{S}_{-\theta}^{2}:=\left\{x\in \mathbb{S}^{2}:  x\cdot e_{\boldsymbol{z}} \leq -\cos(\theta), \;\text{where } e_{\boldsymbol{z}} = (0,0,1)^{\operatorname{T}}\right\},\quad \theta \in \left(0,\frac{\pi}{2}\right),\]
and $\mathbb{S}_{+\theta}^{2}\subset\mathbb{S}^{+}$ as
\[\mathbb{S}_{+\theta}^{2}:=\left\{x\in \mathbb{S}^{2}:  x\cdot e_{\boldsymbol{z}} \geq \cos(\theta), \;\text{where } e_{\boldsymbol{z}} = (0,0,1)^{\operatorname{T}}\right\},\quad \theta \in \left(0,\frac{\pi}{2}\right).\]

We present several numerical experiments by designing different target intensities $g$. The source intensity 
\begin{equation}\label{source}
    f(x)=\left\{\begin{aligned}1,\qquad &x\in\mathbb{S}_{-\pi/4}^{2},\\
    0,\qquad &\text{else,}\end{aligned}\right.
\end{equation}
supported on $\Omega = \mathbb{S}_{-\pi/4}^{2}$ is fixed. In fact, the computational stability of the numerical method depends more on the target $g$ than the source $f$. In experiments, $\mathcal{M}_{h}$ is the second-order mesh discretized on $\Omega = \mathbb{S}_{-\pi/4}^{2}$. The finite element space $V_h$ is of second order. Linear equations in Algorithm \ref{alg1} are solved by incomplete LU factorization preconditioned CG method, where the constraint $\int_{\Omega}u^{k+1}=0$ is handled by the constant null space of the Krylov solver.

In this section, $\|r\|_{2}:= \|r(x)\|_{L^2}$ is used to denote the $L^2$ norm of the residual. The illumination is simulated using the ray-tracing method. We plot the ray-traced image ${T_{u_{h}}}_{\,\#} f$ using the kernel density function in Matlab, where $u_h$ is the numerical solution of the problem, and the notation $\#$ is defined in (\ref{primal}). To ensure the convergence of the method, the initial value $u^0$ should be a $c$-convex function, which is generally zero. If the target $g$ is far away from ${T_{u^{0}}}_{\,\#} f$, we may choose a middle value $g_{mid}$ and solve the reflector problem with the target $g_{mid}$. Then, the solution of this problem can be used as the initial value for the reflector problem with the target $g$.

\begin{example} \textbf{Smooth target.} In this example, we design a freeform reflector to produce the smooth target intensity $g$ in the far-field. Here, the intensity $g$ is compactly support on $\mathbb{S}_{+\pi /4}^{2}$, shown Figure \ref{ex1r}(a). The source intensity $f$ is given by (\ref{source}). The initial function $u^0$ in Algorithm \ref{alg1} is set to be zero. We choose the step size $\tau =0.5$ and the finite element mesh size $h = 9.82\times 10^{-3}$.
\begin{figure*}[h]
    \centering
    \includegraphics[width=0.8\textwidth]{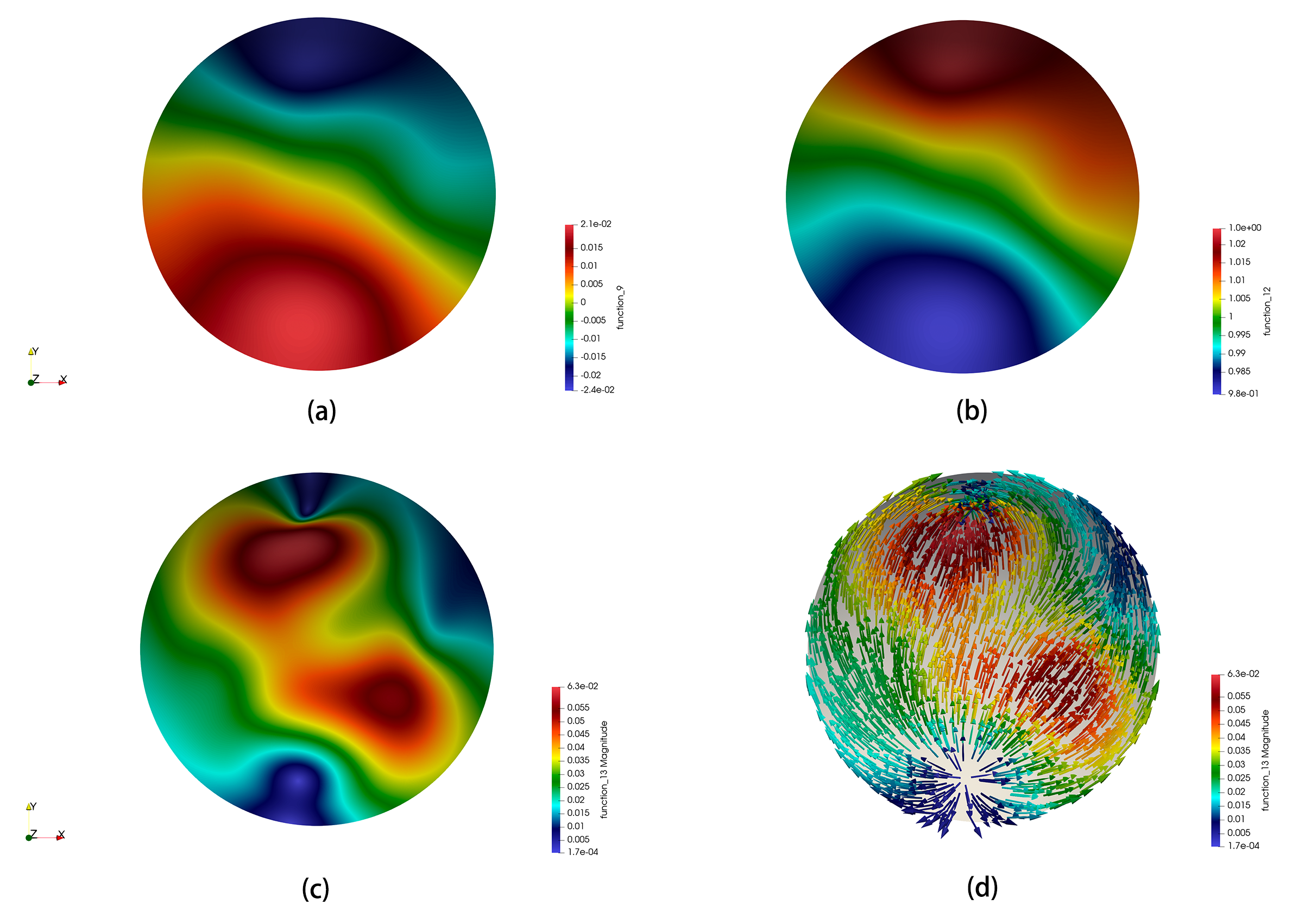}
    \caption{Results of Example 6.1. (a) Numerical solution $u_{h}$ on $\mathbb{S}_{-\pi/4}^{2}$, where the mesh size $h=9.82\times 10^{-3}$. (b) Radial distance function $\rho_h := e^{-u_h}$. (c) Magnitude $|\nabla \rho_{h}|$. (d) Vector field $\nabla \rho_{h}$, where the colors correspond to the values of $|\nabla \rho_{h}|$.} \label{ex1}
\end{figure*}

\begin{figure*}[h]
    \centering
    \includegraphics[width=1\textwidth]{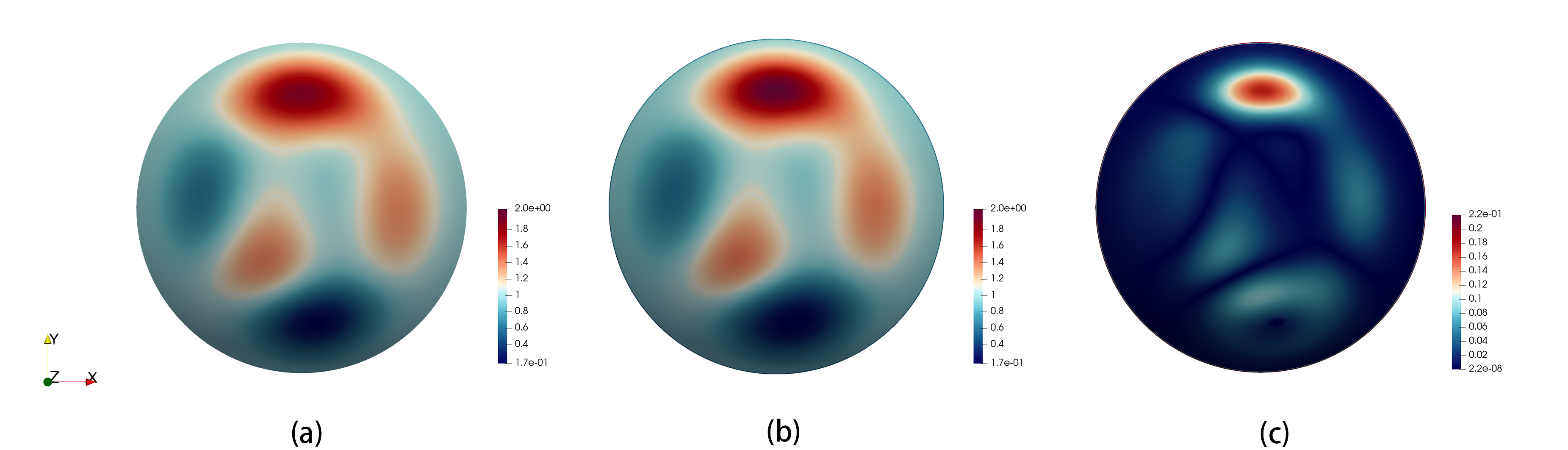}
    \caption{Ray-traced image of Example 6.1. (a) Far-field target intensity $g$ on $\mathbb{S}_{+\pi /4}^{2}$. (b) Far-field ray-traced intensity $g_{h}$ using the numerical solution $u_{h}$, where $h=9.82\times 10^{-3}$. (c) Absolute error $|g-g_{h}|$.} \label{ex1r}
\end{figure*}

Figure \ref{ex1}(a) and (b) demonstrate the numerical solution $u_h$ and its gradient $\nabla u_h$, respectively. In Figure \ref{ex1r}(b), we presents the ray-traced result $g_h:= {T_{u_h}}_{\,\#}f$ using the computed $u_{h}$ in Figure \ref{ex1}(a). It is difficult to distinguish the difference between $g$ and $g_h$ from Figure \ref{ex1r}(a) and (b). Further, we plot the absolute error $|g-g_h|$ in Figure \ref{ex1r}(c) to evaluate the difference between $g$ and $g_h$. The error is relatively small in most areas, and the maximal absolute error 0.2 occurs near the peak of the intensity $g$. Although the intensity $g$ is smooth, the high contrast of $\frac{\max g}{\min g}\approx 12$ makes the condition number of $\nabla T_{u_{h}}$ small, decreasing the accuracy of the numerical solution.

\begin{figure*}[h]
    \centering
    \includegraphics[width=0.43\textwidth]{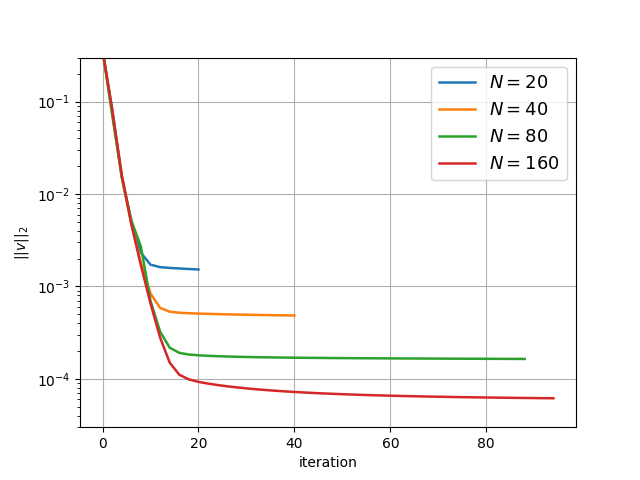}
    \caption{Convergence curves of Example 6.1 at different discretization levels, residual $\|r\|_{2}$ versus iterations.}\label{ex1c}
\end{figure*}

Figure \ref{ex1c} shows the changes of residual value $\|r\|_{2}$ over iterations for mesh levels $N =20,\,40,\,80,\,160$, which correspond to the mesh sizes $h = 3.93\times 10^{-2}$, $1.96\times 10^{-2}$, $9.82\times 10^{-3}$, $4.9\times 10^{-3}$. Here $N$ refers to the number of mesh cells along the geodesic radius of $\mathbb{S}_{-\pi/4}^{2}$. The algorithms for $N =20,\,40,\,80,\,160$ stop after 11, 21, 45, and 47 iterations, respectively.

\end{example}

\begin{example}\textbf{Off-axis target}. In this example, the far-field target intensity $g$ is 
\begin{equation}\label{G2}
    g(x)=\left\{\begin{aligned}
    1,\qquad &\text{if }  x\cdot q \geq \cos\left(\frac{\pi}{4}\right), \text{ where } q =\left(0, -\sin\left(\frac{\pi}{8}\right), \cos\left(\frac{\pi}{8}\right)\right)^{\operatorname{T}},\\
    0,\qquad &\text{else},
\end{aligned}\right.
\end{equation}
which corresponds to the blue region in Figure \ref{ex2}(a). The source target $f$ defaults to (\ref{source}), corresponding to the orange region in Figure \ref{ex2}(a). The step size $\tau = 0.5$ and the mesh size $h=9.82\times 10^{-3}$.
\begin{figure*}[h]
    \centering 
    \includegraphics[width=1\textwidth]{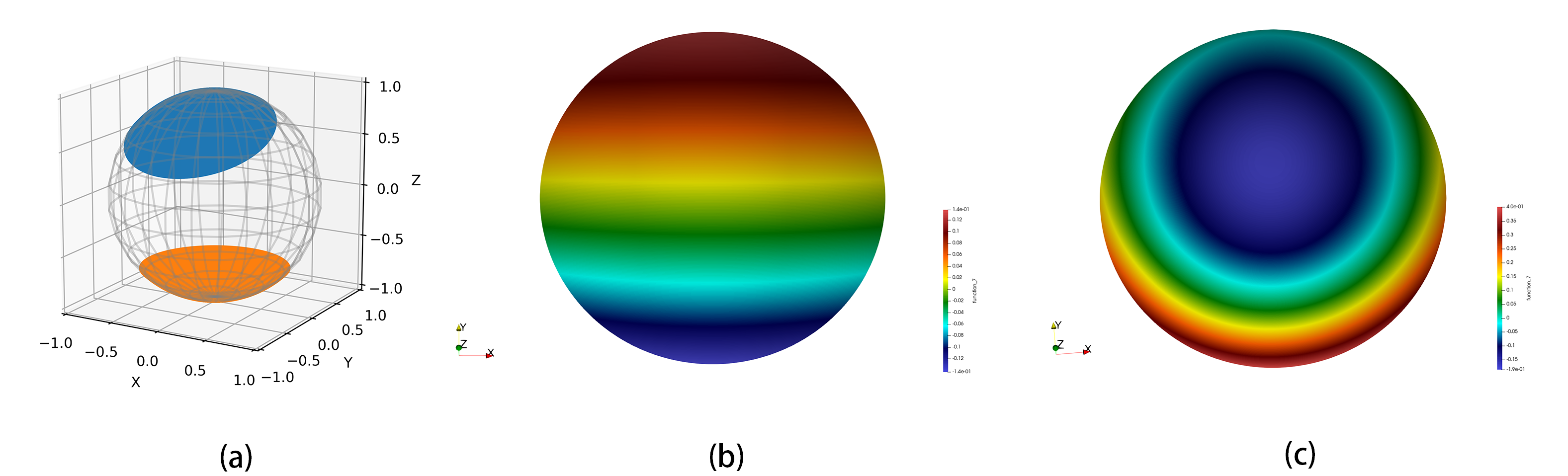}
    \caption{Results of Example 6.2. (a) The source intensity $f$ (orange) and the far-field target intensity $g$ (blue) on $\mathbb{S}^{2}$. (b) Numerical solution $u_{h}$ on $\mathbb{S}_{-\pi /4}^{2}$, where the mesh size $h=9.82\times 10^{-3}$. (c) Numerical solution $\varphi_{h}$ with $h=9.82\times 10^{-3}$. }\label{ex2}
\end{figure*}

\begin{figure*}[h]
    \centering 
    \includegraphics[width=1\textwidth]{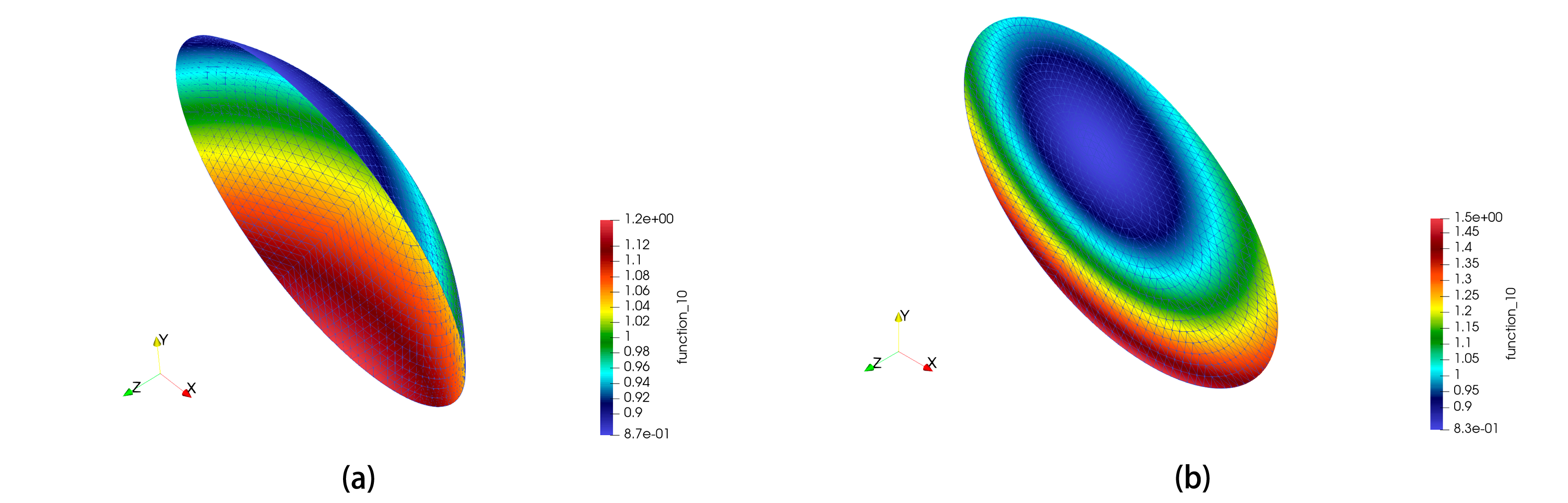}
    \caption{Results of Example 6.2. (a) The reflecting surface $\Gamma =\{x \rho_h(x),\;x \in \mathbb{S}_{-\pi /4}^{2} \}$, where $\rho_h= e^{-u_h}$. (b) The reflecting surface $\Gamma =\{x \rho_h(x),\;x \in \mathbb{S}_{-\pi /4}^{2}\}$, where $\rho_h= e^{\varphi_h}$. The colors of the surfaces correspond to the values of $\rho_h$.}\label{ex2f}
\end{figure*}

In this example, we solve the far-field problem using two different formulations mentioned in Theorem \ref{OT_linkA} and Theorem \ref{OT_linkB}, i.e., the optimal transport problem with the cost function $c(x,y) = -\log(1-x\cdot y)$ and $c(x,y)=\log(1-x\cdot y)$. Here, their numerical solutions are denoted by $u_{h}$ and $\varphi_h$,  which are shown in Figure \ref{ex2} (b) and (c), respectively. The corresponding reflecting surfaces are shown in Figure \ref{ex2f}.

In experiments, we initialize 
\[\begin{aligned}
    u^0&=0,\\
    \varphi^0& =\log\left(\frac{b}{x\cdot e_{\boldsymbol{z}}}\right),\;e_{\boldsymbol{z}} =(0,0,1)^{\operatorname{T}},
\end{aligned}\]
and $b$ is some negative constant such that $\int_{\mathbb{S}_{-\pi/4}^{2}}\varphi^0 =0$. In particular, ${T_{u^0}}_{\,\#} f = {T_{-\varphi^0}}_{\,\#} f=f(-x)$, i.e., the uniform distribution on $\mathbb{S}_{+\pi/4}^{2}$. The center of $f(-x)$ forms an angle of $\frac{\pi}{8}$ with the center of $g$, increasing the difficulty of the boundary match. 
\begin{figure*}[h]
    \centering
    \includegraphics[width=0.85\textwidth]{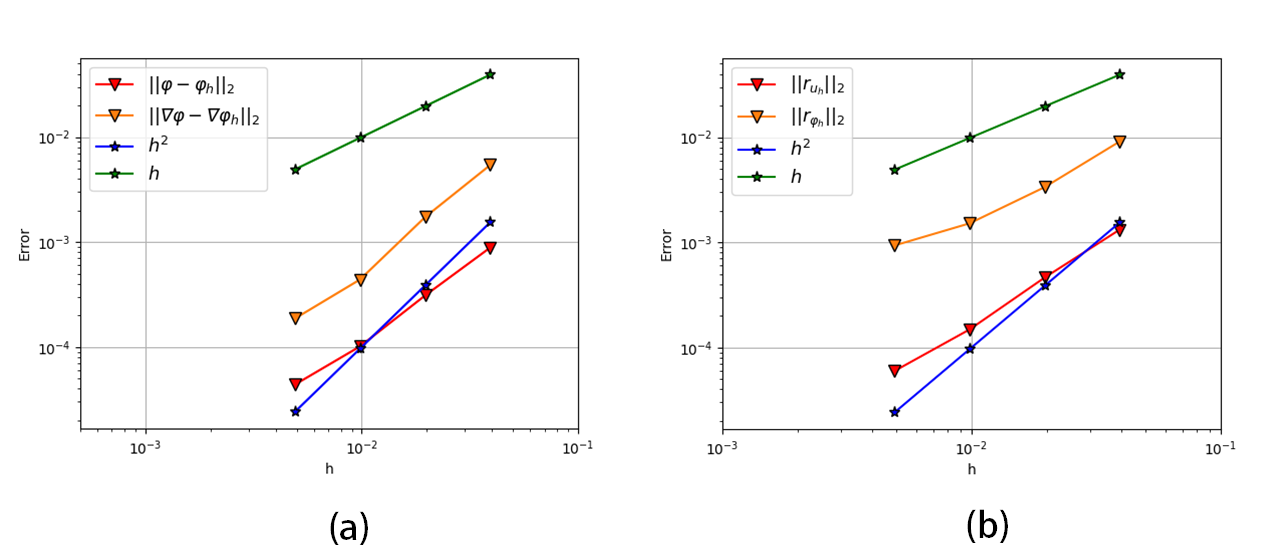}
    \caption{Error plots for the numerical solutions $u_h, \varphi_h$ of Example 6.2. (a) $\|\varphi-\varphi_{h}\|_{2}$ and $\|\nabla\varphi-\nabla\varphi_{h}\|_{2}$ versus the mesh size $h$, where $\varphi$ is the exact solution. (b) Residual values $\|r_{u_h}\|_{2}$ and  $\|r_{\varphi_h}\|_{2}$ versus $h$, where $r_{u_h}$ and $r_{\varphi_h}$ are the residuals of $u_{h}$ and $\varphi_{h}$, respectively.}\label{ex2e}
\end{figure*}

We remarked that for this example, OT with the cost $c(x,y)=\log(1-x\cdot y)$ has the exact solution $\varphi$ 
\[\varphi(x) = \log\left(\frac{b}{x\cdot p}\right),\qquad p = \left(\frac{q_1}{\sqrt{2(q_3+1)}}, \frac{q_2}{\sqrt{2(q_3+1)}},\sqrt{\frac{q_{3}+1}{2}}\right)^{\operatorname{T}},\quad x\in\mathbb{S}_{-\pi/4}^{2},\]
where the vector $q=(q_1,q_2,q_3)^{\operatorname{T}}$ is given in (\ref{G2}) and $b$ is an arbitrary negative constant, which is often taken to make $\int_{\mathbb{S}_{-\pi/4}^{2}}\varphi =0$. The exact reflector surface of this case is a plane in $\mathbb{R}^{3}$, as shown in Figure \ref{ex2f}(b). We plot the errors $\|\varphi-\varphi_{h}\|_{2}$ and $\|\nabla\varphi-\nabla\varphi_{h}\|_{2}$ versus the mesh size $h$ in Figure \ref{ex2e}(a). Meanwhile, the changes of the residual values $\|r_{u_h}\|_{2}$ and $\|r_{\varphi_h}\|_{2}$ with respect to the mesh size $h$ have been shown in Figure \ref{ex2e}(b). It is obvious that the solution $u_{h}$ has higher accuracy than $\varphi_h$.

\end{example}

\begin{example}\textbf{Discontinuous target with singular background}.
Different from the previous examples, we set the far-field illumination area to be a far-field plane $P_{\infty}$ instead of a far-field sphere. In addition, the target intensity $g$ is supported on the square area of $P_{\infty}$, which has an image of the letter "A" as shown in Figure \ref{near}(a). For the visualization of the image on the plane, $P_{0.5}:=\{x=(x_1,x_2,x_3)^{\operatorname{T}}\in \mathbb{R}^{3}:x_3=0.5\}$ is used to substitute $P_{\infty}$. We use $T_{u}^{\operatorname{P}}$ to denote the reflecting map from the source $\mathbb{S}^2$ to the target plane $P_{0.5}$.

Utilizing the stereographic projection, we can project $g$ on $P_{0.5}$ onto a new function $\hat{g}$ on $\mathbb{S}^{2}$, where $\hat{g}$ defined by $\hat{g}:= \mathcal{S}g$,
\[\begin{aligned}\mathcal{S}:L^{1}(P_{0.5})&\rightarrow L^{1}(\mathbb{S}_{-\pi/4}^{2}),\\
g&\rightarrow \hat{g}, \qquad \qquad\text{where}
\end{aligned}\]
\[\hat{g}(x) = \frac{\left( X^2 +Y^2+ 0.5^2\right)^{1.5}}{0.5} g\left(X,Y\right), \quad x = (x_1,x_2,x_3)^{\operatorname{T}}\in \mathbb{S}_{-\pi/4}^{2},\]
\[\text{where}\qquad\left\{\begin{aligned}
X = 0.5\frac{\sqrt{x_1^2 +x_2^2}}{x_3} \cos\left(\tan^{-1}\left(\frac{x_2}{x_1}\right)\right),\\
Y = 0.5\frac{\sqrt{x_1^2 +x_2^2}}{x_3} \sin\left(\tan^{-1}\left(\frac{x_2}{x_1}\right)\right).
\end{aligned}\right.\] 
Figure \ref{ex4e}(a) shows the image of $\hat{g}$. We can solve the far-field reflector problem for the source $f$ in (\ref{source}) and the target $\hat{g}$. The obtained solution $u$ can produce the illumination pattern of Figure \ref{near}(a) on the far-field plane, and the illumination $T_{u\; \#}^{\operatorname{P}}f$ is related to $T_{u\, \#}f$ by $T_{u\; \#}^{\operatorname{P}}f=\mathcal{S}^{-1}(T_{u\,\#}f)$.

Both the shape of $\operatorname{supp}g$ and the discontinuity of $g$ increase the difficulty of solving for $u$ efficiently. We set the step size $\tau = 0.3$ and the initial value $u^0=0$. Due to the discontinuity of the target $g$, the algorithm stops after 12 iterations for $h = 9.82\times 10^{-3}$, with the residual value $r_{u_h}$ equal to $4\times 10^{-2}$. 

\begin{figure*}[h]
    \centering
    \includegraphics[width=1\textwidth]{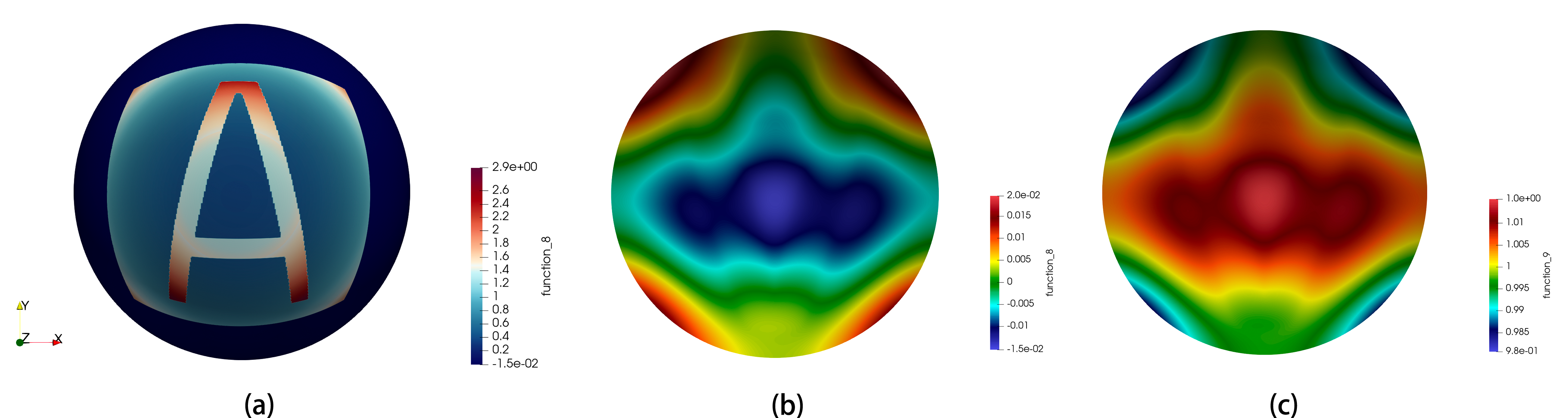}
    \caption{(a) Far-field target intensity function $\hat{g}$ on $\mathbb{S}_{+}^{2}$, (b) numerical solution $u_h$ on $\mathbb{S}_{-\pi/4}^{2}$ for $h =9.82\times 10^{-3}$ and (c) radial distance function $\rho_h := e^{-u_h}$.}\label{ex4e}
\end{figure*}

\begin{figure*}[h]
    \centering
    \includegraphics[width=0.63\textwidth]{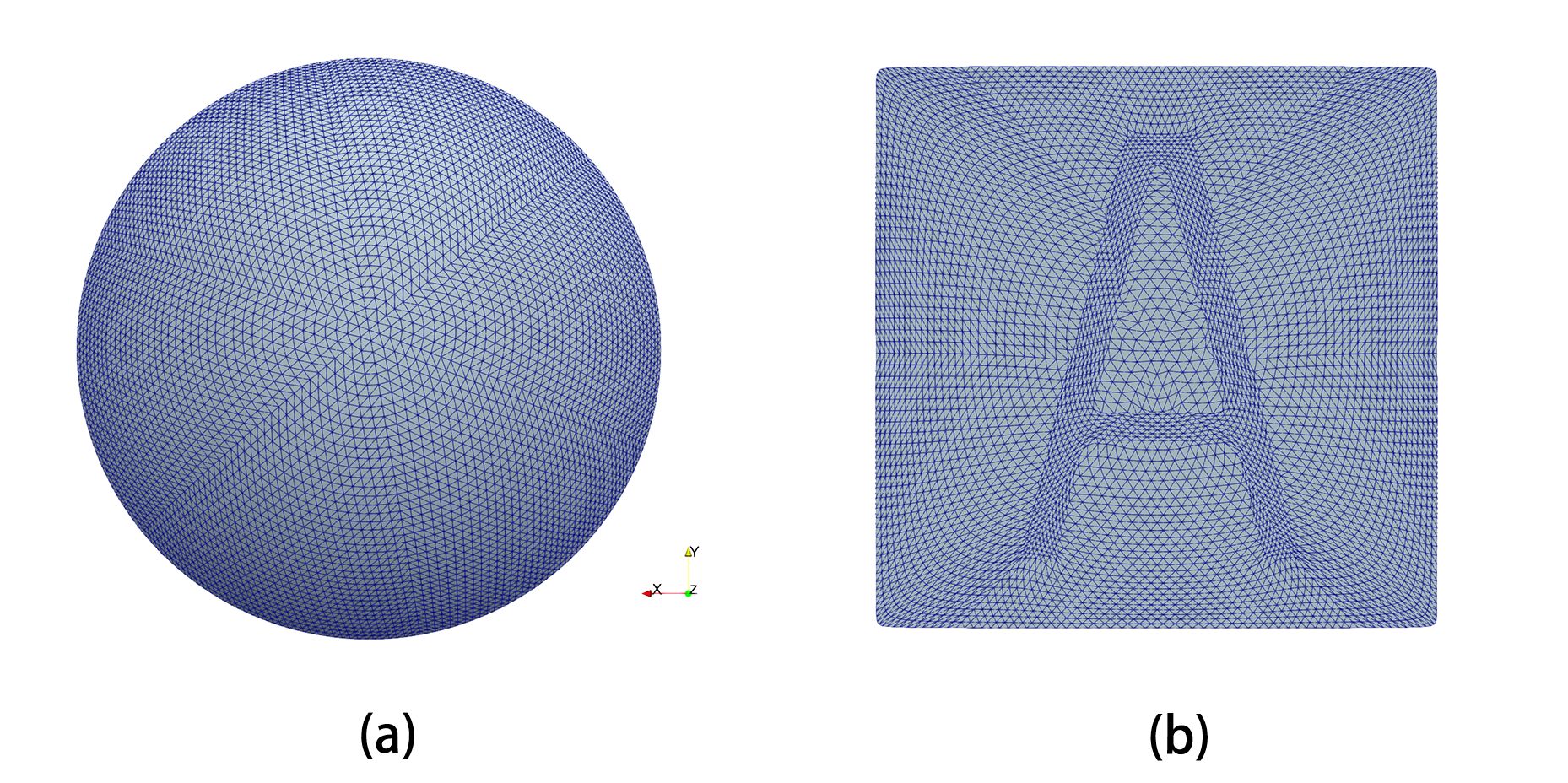}
    \caption{Results of Example 6.3. (a) Finite element mesh $\mathcal{M}_{h}$ with $h=1.96\times 10^{-2}$. (b) New mesh obtained by the far-field mapping $T_{u_h}^{\operatorname{P}}\mathcal{M}_{h}$, $h=1.96\times 10^{-2}$. }\label{nearsol}
\end{figure*}
Figure \ref{ex4e}(b) and (c) show the numerical solution $u_h$ of the far-field problem and its radial distance function $\rho_h$, where $h = 9.82\times 10^{-3}$. Figure \ref{nearsol}(a) and (b) respectively show the finite element mesh $\mathcal{M}_{h}$ discretizing $\mathbb{S}_{-\pi/4}^{2}$ and the mesh obtianed by $T_{u_h}^{\operatorname{P}}\mathcal{M}_{h}$, where $h=1.96\times 10^{-2}$.
\begin{figure*}[h]
    \centering
    \includegraphics[width=1\textwidth]{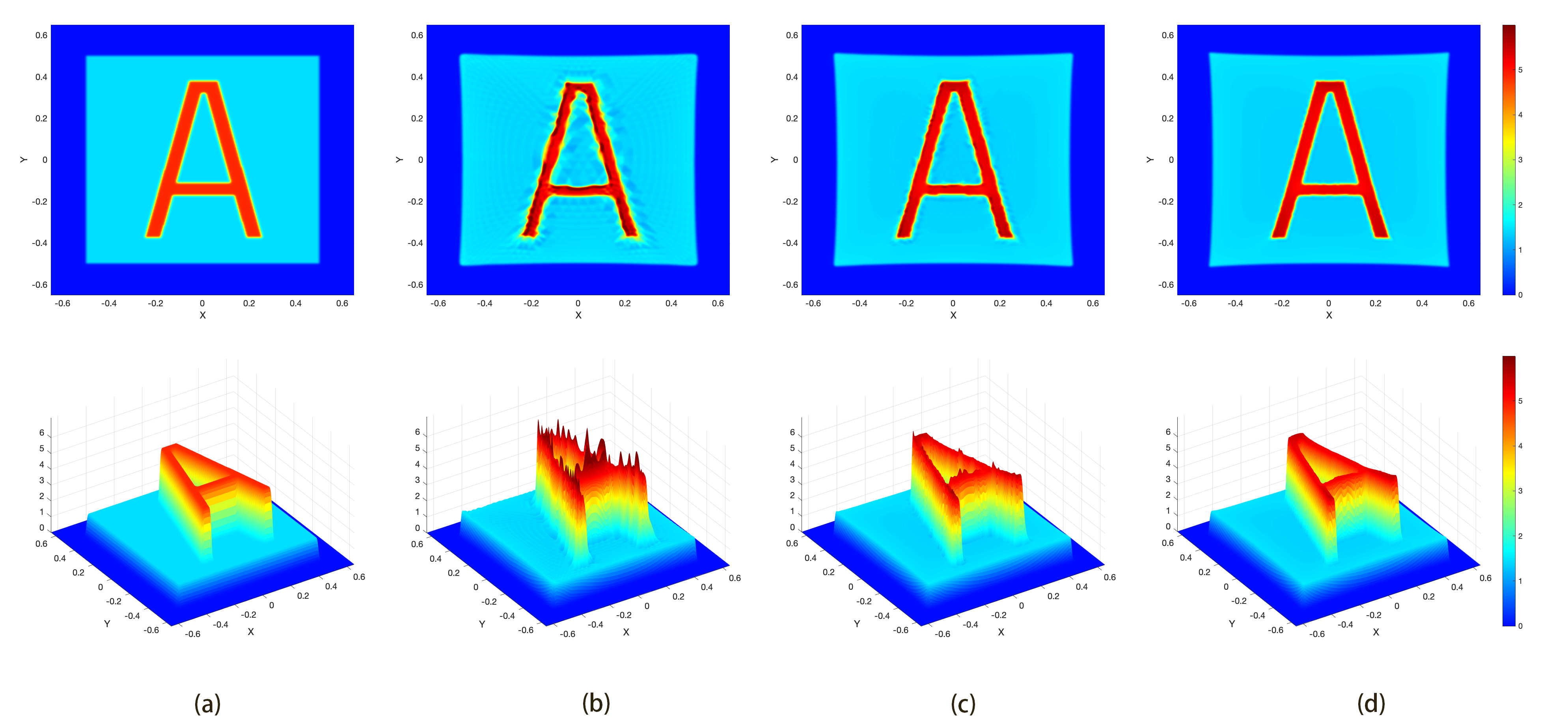}
    \caption{Ray-traced images $g_{h}:= T_{u_h\; \#}^{\operatorname{P}}f$ on the plane, using the results $u_h$ of Example 6.3. From left to right: (a) target intensity $g$ on $P_{0.5}$, (b) ray-traced intensity $g_{h}$ with $h= 3.93×\times 10^{-2}$, (c) ray-traced intensity $g_{h}$ with $h= 1.96\times 10^{-2}$, (d) ray-traced intensity $g_{h}$ with $h= 9.82\times 10^{-3}$. The first row is the image on the 2D plane, and the second row is its 3D view.}\label{near} 
\end{figure*}

In Figure \ref{near}, the far-field ray-traced images $g_h:= T_{u_h\; \#}^{\operatorname{P}} f$ on the target plane $P_{0.5}$ are shown for the mesh sizes (b) $h=3.93×\times 10^{-2}$, (c) $h=1.96\times 10^{-2}$, (d) $h=9.82\times 10^{-3}$. The ray-traced illumination on the target plane is increasingly clear as the mesh is refined.

\end{example}

\section{Conclusions}

This work introduces a fast algorithm derived from the optimal transportation theory to address the far-field reflector problem. The method employs the Sobolev gradient descent on the reduced objective functional of OT, achieving a balance between computational stability and efficiency. Theoretical analysis verifies the superior convergence properties of the descent scheme, while experimental results demonstrate that the proposed method performs exceptionally well even in singular cases.

Future work may focus on extending this research to the challenging near-field reflector problem. Another promising direction is to apply the $c$-transform technique to enhance the stability of the method, enabling it to handle highly contrasting singular cases \cite{doskolovich2022supporting} more effectively.

\end{document}